\newtheorem{thm}{Theorem}[section]
\newtheorem*{thm*}{Theorem}
\newtheorem{cor}[thm]{Corollary}
\newtheorem{lem}[thm]{Lemma}
\newtheorem{prop}[thm]{Proposition}
\theoremstyle{definition}
\newtheorem{conv}[thm]{Convention}
\newtheorem{dfn}[thm]{Definition}
\newtheorem*{dfn*}{Definition}
\newtheorem{rem}[thm]{Remark}
\newtheorem*{conj*}{Conjecture}
\theoremstyle{remark}
\newtheorem*{claim*}{Claim}
\numberwithin{equation}{thm}
\newcommand{\ilim}[1][]{\mathop{\varinjlim}\limits_{#1}}
\def\LL{\mathbb{L}}
\def\PP{\mathbb{P}}
\def\ZZ{\mathbb{Z}}
\newcommand{\rZ}{\mathrm{Z}}
\newcommand{\sZ}{\mathsf{Z}}
\newcommand{\cE}{\mathcal{E}}
\newcommand{\cF}{\mathcal{F}}
\newcommand{\cK}{\mathcal{K}}
\newcommand{\cO}{\mathcal{O}}
\newcommand{\cP}{\mathcal{P}}
\newcommand{\cQ}{\mathcal{Q}}
\newcommand{\cS}{\mathcal{S}}
\newcommand{\cT}{\mathcal{T}}
\newcommand{\cU}{\mathcal{U}}
\newcommand{\cV}{\mathcal{V}}
\newcommand{\cX}{\mathcal{X}}
\def\dpf{\mathrm{D^{pf}}}
\def\dub{\mathrm{D}}
\def\Mod{\operatorname{\mathrm{Mod}}}
\def\End{\mathrm{End}}
\def\Hom{\mathrm{Hom}}
\def\lnil{\mathrm{lnil}}
\def\lred{\mathrm{lred}}
\def\ltensor{\otimes^{\LL}}
\def\id{\mathrm{id}}
\def\red{\mathrm{red}}
\def\res{\mathrm{res}}
\def\Spec{\mathrm{Spec}}
\def\tSpec{\mathrm{Spec}_\triangle}
\def\ttSpec{\mathrm{Spec}_\otimes}
\def\Th{\mathbf{Th}}
\def\ol{\overline}
\def\nat{\natural}
\def\one{\mathbf{1}}
\title[Triangular Spectra and Derived categories]{Triangular spectra and their applications to derived categories of noetherian schemes}
\author{Hiroki Matsui}
\address{Department of Mathematical Sciences, Faculty of Science and Technology, Tokushima University, 2-1 Minamijyousanjima-cho, Tokushima 770-8506, Japan}
\email{hmatsui@tokushima-u.ac.jp}
\thanks{2020 {\em Mathematics Subject Classification.} 14A15, 14F08, 14H52, 18G80}
\thanks{{\em Key words and phrases.} Balmer spectrum, elliptic curve, noetherian scheme, perfect derived category, tensor triangulated category, triangular spectrum, triangulated category} 
\thanks{The author was partly supported by JSPS Grant-in-Aid for Early-Career Scientists 22K13894.}
\begin{document}

\begin{abstract}
In recent work, for a triangulated category $\cT$, the author introduced a topological space $\tSpec(\cT)$ which we call the triangular spectrum of $\cT$ as a tensor-free analog of the Balmer spectrum for a tensor triangulated category.
In this paper, we use the triangular spectrum to reconstruct a noetherian scheme $X$ from its perfect derived category $\dpf(X)$. 
As an application, we give an alternative proof of the Bondal-Orlov-Ballard reconstruction theorem in the special case (when both varieties have ample or anti-ample canonical bundles).
Moreover, we define the structure sheaf on $\tSpec(\cT)$ and compare the triangular spectrum and the Balmer spectrum as ringed spaces.
\end{abstract}

\maketitle
\section{Introduction}

In this paper, we consider the {\it reconstruction problem} of noetherian schemes from their {\it perfect derived categories}, asking whether a triangle equivalence between perfect derived categories $\dpf(X) \cong \dpf(Y)$ implies an isomorphism $X \cong Y$ of noetherian schemes?
Many authors have studied this kind of reconstruction problem well; see  \cite{Ball, Bal02, BO, Cal, Sp}.
It is well-known that affine noetherian schemes are reconstructed using the triangulated category structures from their perfect derived categories.
By contrast, this reconstruction problem fails for non-affine noetherian schemes in general.
For example, Mukai \cite{Muk} proved that an abelian $A$ and its dual $A^\vee$ (which are not isomorphic in general) have the equivalent perfect derived categories. 

Therefore, the triangulated category structure is insufficient for reconstructing $X$ from $\dpf(X)$. 
Balmer proved that $X$ can be reconstructed from $\dpf(X)$ using the tensor triangulated category structure as follows.
For an essentially small tensor triangulated category $(\cT, \otimes, \one)$, Balmer defined the ringed space $\ttSpec(\cT)$, which we call the {\it Balmer spectrum} of $(\cT, \otimes, \one)$.
In \cite{Bal05}, he proved that there is an isomorphism
$$
X \cong \ttSpec(\dpf(X))
$$	
for the tensor triangulated category $(\dpf(X), \ltensor_{\cO_X}, \cO_X)$.
This isomorphism shows that $X$ is reconstructed from $\dpf(X)$ using the tensor triangulated category structure.
Balmer spectra allow us to study tensor triangulated categories via algebro-geometric methods.
This theory is called {\it tensor triangular geometry} and has been actively studied in various fields of mathematics.
Recently, the author \cite{Mat} introduced the topological space $\tSpec(\cT)$ for a triangulated category $\cT$ to generalize tensor triangular geometry to arbitrary essentially small triangulated categories.
We call $\tSpec(\cT)$ the {\it triangular spectrum} of $\cT$.
For the perfect derived category $\dpf(X)$ of a noetherian scheme $X$, it is shown in \cite{Mat} that there is an immersion
$$
X \hookrightarrow \tSpec(\dpf(X))
$$
of topological spaces. 

One of the most famous reconstruction results is due to Bondal-Orlov \cite{BO} and Ballard \cite{Ball}, which states that for Gorenstein projective varieties $X_1$ and $X_2$ over a field $k$ with ample or anti-ample canonical bundles, $X_1$ and $X_2$ are isomorphic as varieties whenever their perfect derived categories are equivalent as $k$-linear triangulated categories.
Although they just assume either $X_1$ or $X_2$ has an ample or anti-ample canonical bundle, we call above the {\em Bondal-Orlov-Ballard reconstruction theorem}.
As an application of triangular spectra, we prove the following result, which generalizes the Bondal-Orlov-Ballard reconstruction theorem.

\begin{thm}\rm{(Theorem \ref{main})}.\label{intro1}
Let $X_1$ and $X_2$ be noetherian schemes, and $\Phi: \dpf(X_1) \xrightarrow{\cong} \dpf(X_2)$ be a triangle equivalence.
Assume there is a line bundle $L_i$ on $X_i$ for $i=1,2$ satisfying the following conditions:
\begin{enumerate}[\rm(i)]
\item
$L_i$ is ample or anti-ample for $i=1,2$.
\item
There is an isomorphism
$$
\Phi(F \ltensor_{\cO_{X_1}} L_1) \cong \Phi(F) \ltensor_{\cO_{X_2}} L_2
$$
for any $F \in \dpf(X_1)$.
\end{enumerate}
Then $(X_1)_{\red}$ and $(X_2)_{\red}$ are isomorphic as schemes.
\end{thm}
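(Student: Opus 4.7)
The plan is to exploit the functoriality of the triangular spectrum together with the immersion $X_i \hookrightarrow \tSpec(\dpf(X_i))$ recalled in the introduction. Since $\tSpec$ depends only on the triangulated structure, the equivalence $\Phi$ induces a homeomorphism
$$
\Phi_\triangle : \tSpec(\dpf(X_1)) \xrightarrow{\cong} \tSpec(\dpf(X_2)).
$$
I would then show that $\Phi_\triangle$ carries the subspace corresponding to $(X_1)_\red$ onto the one corresponding to $(X_2)_\red$, and that the structure sheaf on $\tSpec$ constructed in the paper promotes this homeomorphism to an isomorphism of schemes.

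The decisive ingredient is an intrinsic, triangulated-categorical description of the image of $X_\red \hookrightarrow \tSpec(\dpf(X))$. The natural candidate uses the auto-equivalence $(-) \ltensor_{\cO_X} L$ of $\dpf(X)$, which induces an auto-homeomorphism $\sigma_L$ of $\tSpec(\dpf(X))$. Because tensoring a residue object $k(x)$ with any line bundle leaves it isomorphic to $k(x)$, the image of $X_\red$ should be contained in an appropriate fixed locus of $\sigma_L$; the ample (or anti-ample) hypothesis is exactly what is needed for this inclusion to coincide (on the relevant part of the spectrum) with the image of the immersion, in analogy with the way ampleness of $\omega_X$ is used in the classical Bondal--Orlov--Ballard argument to single out point and shift objects via Beilinson-style resolutions by powers of $L$.

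Granted such a characterization, condition (ii) rewrites as $\Phi_\triangle \circ \sigma_{L_1} = \sigma_{L_2} \circ \Phi_\triangle$ (up to isomorphism of auto-equivalences), so $\Phi_\triangle$ maps the distinguished subspace on the left to the one on the right. Composing with the two immersions yields the desired homeomorphism $(X_1)_\red \cong (X_2)_\red$. To upgrade this to an isomorphism of schemes, I would invoke the structure sheaf on $\tSpec$ announced in the abstract: it is constructed from purely triangulated data, so $\Phi$ automatically induces an isomorphism of structure sheaves, and its restriction to the distinguished subspace realizes the scheme structure on $X_\red$.

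The main obstacle I anticipate is precisely the intrinsic characterization of the image of the immersion $X_\red \hookrightarrow \tSpec(\dpf(X))$ in twist-equivariant terms, which is where the ampleness hypothesis must actually be cashed in; once this reconstruction statement is established (presumably as the unnamed \emph{Theorem \ref{main}} referenced in the statement), the rest of the argument is formal from the naturality of $\tSpec$ and its structure sheaf. The passage from $X$ to $X_\red$ is unavoidable because neither the triangulated category $\dpf(X)$ nor the twist by a line bundle can detect the nilpotent part of $\cO_X$, in parallel with the situation for the Balmer spectrum.
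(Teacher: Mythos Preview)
Your outline is correct and matches the paper's proof closely on the topological side. The paper defines $\tSpec^{L_i}(\dpf(X_i))$ as the set of prime thick subcategories $\cP$ with $\cP \ltensor L_i = \cP$ (your ``fixed locus of $\sigma_{L_i}$''), observes that condition~(ii) makes $\Phi^*$ restrict to a homeomorphism between these loci, and then identifies $\tSpec^{L_i}(\dpf(X_i))$ with $\ttSpec(\dpf(X_i)) \cong X_i$. The ampleness is cashed in more simply than you suggest: since $\{L_i^{\otimes n}\mid n\in\ZZ\}$ generates $\dpf(X_i)$, any thick subcategory stable under $\otimes L_i$ is automatically a thick \emph{ideal}, and then Theorem~\ref{mat}(1) says a prime thick subcategory which is an ideal is a prime thick ideal. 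No Beilinson-style resolutions are needed.

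The one place your plan diverges from the paper, and where you should be careful, is the reconstruction of the structure sheaf. You propose to restrict $\cO_{\triangle,\cT}$ from $\tSpec(\dpf(X))$ to the image of $X$; but the comparison $i^{-1}\cO_{\triangle,\cT}\cong (\cO_{\otimes,\cT})_\red$ (Theorem~\ref{main2}(2)) is only established when $\ttSpec(\cT)$ is \emph{open} in $\tSpec(\cT)$, and this is not known for arbitrary noetherian $X$. The paper sidesteps this by never restricting a sheaf on $\tSpec$: once the homeomorphism $f:X_2\to X_1$ is in hand, it shows directly that $\Phi$ carries $\mathrm{D^{pf}_{\mathit{f(Z)}}}(X_1)$ to $\mathrm{D^{pf}_{\mathit{Z}}}(X_2)$ for each closed $Z\subseteq X_2$, passes to Verdier quotients, takes idempotent completions, and applies Corollary~\ref{cntsect} ($\rZ(\dpf(U))_\lred\cong\Gamma(U,\cO_X)_\red$) to obtain compatible isomorphisms $\Gamma(f(U),\cO_{X_1})_\red\cong\Gamma(U,\cO_{X_2})_\red$. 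This is the same center-of-localizations idea underlying $\cO_{\triangle,\cT}$, but indexed by opens of $X$ rather than of $\tSpec$, so no openness hypothesis on the Balmer locus is required.
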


\noindent
Actually, the reconstruction of underlying topological spaces essentially appeared in \cite{HO}.
Although they use the result of Koll\'ar-Lieblich-Olsson-Sawin \cite{KLOS} to reconstruct structure sheaves, we use the {\it center} of a triangulated category, which plays an important role in this paper.


So far, we have considered the triangular spectrum $\tSpec(\cT)$ as a topological space.
In this paper, we will introduce the structure sheaf on $\tSpec(\cT)$ for a triangulated category $\cT$, which makes $\tSpec(\cT)$ a ringed space.
The following second main theorem allows us to compare the Balmer spectrum and the triangular spectrum as ringed spaces:

\begin{thm}\rm{(Theorem \ref{main2})}.\label{intro2} 
Let $(\cT, \otimes, \one)$ be an idempotent complete, rigid, and locally monogenic tensor triangulated category. 
Assume further that $\ttSpec(\cT)$ is noetherian.
\begin{enumerate}[\rm(1)]
\item
There is an inclusion $\ttSpec(\cT) \subseteq \tSpec(\cT)$.
\item
There is a morphism 
$$
i: \ttSpec(\cT)_\red \to \tSpec(\cT)
$$
of ringed spaces satisfying the following conditions:
\begin{enumerate}[\rm(a)]
\item
The map of underlying topological spaces is the inclusion $\ttSpec(\cT) \subseteq \tSpec(\cT)$.
\item
$i$ is an open immersion of ringed spaces whenever $\ttSpec(\cT)$ is an open subset of $\tSpec(\cT)$.
\item
$i$ is an isomorphism of ringed spaces if $\cT$ is classically generated by 
$\one$.
\end{enumerate}
	
\end{enumerate}
\end{thm}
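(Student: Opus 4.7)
First I would observe that the topological inclusion $\ttSpec(\cT)_\red \hookrightarrow \tSpec(\cT)$ is already provided by Matsui \cite{Mat}, so (1) fixes the underlying continuous map and only a compatible morphism of structure sheaves $i^\sharp : i^{-1}\cO_{\tSpec(\cT)} \to \cO_{\ttSpec(\cT)_\red}$ remains to be constructed. The natural candidate comes from evaluation at the unit: for any essentially small triangulated category $\cT'$ containing a distinguished object $\one$, there is a canonical ring homomorphism
\[
\rZ(\cT') \longrightarrow \End_{\cT'}^{*}(\one), \qquad \eta \longmapsto \eta_{\one},
\]
from the graded center to the graded endomorphism ring of $\one$. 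Both structure sheaves of interest are built, on a basis of Thomason opens, from Verdier quotients $\cT/\cT_Z$ of $\cT$: the triangular sheaf from $\rZ(-)$, the Balmer sheaf from $\End^{*}(\one)$. Applying the evaluation map to each such quotient and composing with reduction on the target gives a morphism of presheaves; sheafifying yields $i^\sharp$, and naturality in the Thomason open ensures compatibility with the topological inclusion.

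For (3), if $\cT$ is generated as a thick subcategory by $\one$, every object is obtained from $\one$ by iterated cones, shifts, and summands, so a natural transformation of $\id_\cT$ is uniquely and functorially determined by its value at $\one$. Hence evaluation at $\one$ is already a ring isomorphism $\rZ(\cT) \cong \End_\cT^{*}(\one)$, and under Balmer's framework the right-hand side is reduced, so the reduction step does no harm. The same argument applied to each relevant Verdier quotient gives isomorphisms of the presheaves, hence of their sheafifications, yielding (3).

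For (2), assume $\ttSpec(\cT) \subseteq \tSpec(\cT)$ is open and noetherian. Then a basis of quasi-compact Thomason opens of $\ttSpec(\cT)$ is available, and the local monogeneity hypothesis means that on each such open $U$ the localization $\cT_U$ is generated by the image of $\one$. This reduces the verification that $i^\sharp$ is a basis-wise isomorphism to the case of (3), producing the desired open immersion of ringed spaces.

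The main obstacle I anticipate is (2): the center $\rZ(-)$ is not manifestly a local invariant on $\ttSpec(\cT)$, and one must verify that Verdier localization interacts correctly with evaluation at the unit so that local monogeneity can genuinely be leveraged to invoke (3). Carefully tracking the reduction functor and checking that the resulting sheaf maps assemble coherently across a finite cover provided by the noetherian hypothesis is where the real technical work will lie; the rigidity assumption should be what makes the lifts from $\End^{*}(\one)$ back to elements of the center canonical enough to glue.
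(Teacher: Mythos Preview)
Your outline has two genuine gaps.

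\textbf{Part (1) is not free.} The inclusion $\ttSpec(\cT)\subseteq\tSpec(\cT)$ in this generality is \emph{not} in \cite{Mat}: that paper handles only $\dpf(X)$, and the present theorem is precisely what upgrades the result to arbitrary idempotent complete, rigid, locally monogenic $\cT$. The paper proves (1) by first showing the monogenic case directly (Lemma~\ref{aff}): since $\cT=\langle\one\rangle$, every thick subcategory is a radical thick ideal, and one argues via Balmer's bijection with specialization-closed subsets of $\ttSpec(\cT)$ that prime thick ideals and prime thick subcategories coincide. The general case then follows by passing to a monogenic localization $\cT(U)$ around each $\cP\in\ttSpec(\cT)$. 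You cannot skip this.

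\textbf{Part (3): your isomorphism claim before reduction is false.} You assert that monogenicity forces evaluation at $\one$ to be an isomorphism $\rZ(\cT)\cong\End_\cT(\one)$, arguing that a natural transformation is determined by its value at $\one$. But $\eta_\one=0$ only forces $\eta$ to be \emph{locally nilpotent}, not zero: if $L\to M\to N\to L[1]$ is a triangle with $\eta_L=\eta_N=0$, one gets $\eta_M^2=0$, not $\eta_M=0$. This is exactly why the paper works with $\rZ(\cT)_{\lred}$ throughout and why Proposition~\ref{prop} reads $\rZ(\cT)_{\lred}\cong\End_\cT(\one)_{\red}$. Your further claim that $\End_\cT(\one)$ is automatically reduced is also false (take $\cT=\dpf(X)$ for $X$ non-reduced). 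The reduction is essential on both sides, not a harmless afterthought.

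Once these are fixed, your strategy for (2) is close to the paper's: one uses that on a basic open $U\subseteq\ttSpec(\cT)$ the map $i^p\cO^p_{\triangle,\cT}(U)\to\cO^p_{\otimes,\cT}(U)_{\red}$ is exactly the isomorphism $\rZ(\cT(U))_{\lred}\cong\End_{\cT(U)}(\one_U)_{\red}$ of Proposition~\ref{prop}. Note, however, that Proposition~\ref{prop} itself requires the locally monogenic hypothesis and is proved by a nontrivial induction on the size of a monogenic open cover, using the gluing/Mayer--Vietoris result \cite[Theorem~7.1]{BF}; rigidity enters there, not in the way you suggest. Your last paragraph correctly identifies where the work lies but underestimates it.
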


\noindent
Here, a tensor triangulated category $(\cT, \otimes, \one)$ is said to be {\it rigid} if it is {\it closed} and every object is {\it strongly dualizable}. It is said to be {\it locally monogenic} if it is locally classically generated by the unit object; see Section 3 for details.  

The assumptions in Theorem \ref{intro2} are satisfied for the perfect derived categories of noetherian schemes.
Applying this theorem to such tensor triangulated categories, we obtain the following result:

\begin{thm}\rm{(Corollaries \ref{imm}, \ref{p1}, and \ref{elp})}. \label{intro3}
\begin{enumerate}[\rm(1)]
\item
Let $X$ be a noetherian quasi-affine scheme.
Then there is an isomorphism
$$
\tSpec(\dpf(X)) \cong X_\red 
$$
of ringed spaces.
\item
Let $\PP^1$ be the projective line over a field $k$.
Then there is an isomorphism
$$
\tSpec(\dpf(\PP^1)) \cong \PP^1 \sqcup \left( \bigsqcup_{n \in \ZZ} \Spec (k)\right)
$$
of ringed spaces.
\item
Let $E$ be an elliptic curve over an algebraically closed field. Then there is an isomorphism
$$
\tSpec(\dpf(E)) \cong E \sqcup \left(\bigsqcup_{(r,d) \in I} E_{r,d}\right)
$$
of ringed spaces.
Here, $I:=\{(r,d) \in \ZZ^{2} \mid r >0, \gcd(r,d) = 1\}$, and $E_{r,d}$ is a copy of $E$ for each $(r,d) \in I$.
\end{enumerate}
\end{thm}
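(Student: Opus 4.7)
All three parts will be derived from Theorem~\ref{intro2} applied to the tensor triangulated category $(\dpf(X),\ltensor_{\cO_X},\cO_X)$. The hypotheses---idempotent completeness, rigidity, and local monogenicity---are standard for the perfect derived category of a noetherian scheme, and Balmer's reconstruction identifies $\ttSpec(\dpf(X))$ with the noetherian space $X$. Hence Theorem~\ref{intro2} supplies a canonical morphism of ringed spaces $i\colon X_{\red}\hookrightarrow\tSpec(\dpf(X))$, and the task in each case is to identify the complement, both topologically and as a sheaf of rings.

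\textbf{Part (1).} A noetherian quasi-affine scheme $X$ satisfies $\dpf(X)=\mathrm{thick}(\cO_X)$ because $\cO_X$ is a compact generator of the quasi-coherent derived category. Thus $\dpf(X)$ is generated by its unit object, and Theorem~\ref{intro2}(3) yields the desired isomorphism $\tSpec(\dpf(X))\cong X_{\red}$ directly.

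\textbf{Parts (2) and (3).} Here $\cO_X$ no longer generates $\dpf(X)$, so one must exhibit the additional primes of $\tSpec(\dpf(X))$. The guiding idea is that every sufficiently rigid indecomposable $E\in\dpf(X)$ generates a prime thick subcategory contributing a point of $\tSpec(\dpf(X))$ outside the Balmer part. For $\PP^1_k$, the line bundles $\cO(n)$ with $n\in\ZZ$ each generate an isolated component isomorphic to $\Spec(k)$; Beilinson's exceptional collection $(\cO,\cO(1))$ together with the Krull--Schmidt classification of indecomposables of $\dpf(\PP^1)$ (shifts of $\cO(n)$ and of torsion sheaves at closed points) shows these are the only additional primes. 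For an elliptic curve $E$, one replaces the exceptional collection by Atiyah's classification of indecomposable bundles: for each pair $(r,d)\in I$, the indecomposable vector bundles of rank $r$ and degree $d$ are parametrized by points of $E$, giving one copy $E_{r,d}$ of $E$ for each such pair, and Fourier--Mukai autoequivalences intertwine the $E_{r,d}$ components, providing uniform control. In each case one checks that every prime thick subcategory arises from this construction and that each additional component sits as an open subset of $\tSpec(\dpf(X))$, whereupon Theorem~\ref{intro2}(2) promotes the set-theoretic description to an identification of ringed spaces; the stalks of the structure sheaf on the extra components reduce to endomorphism rings, which are $k$.

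\textbf{Main obstacle.} The crucial difficulty is the exhaustion step: proving that no other prime thick subcategories exist beyond the ones listed. This rests on a fine analysis of the morphism structure of $\dpf(X)$ and how indecomposable objects interact under extensions; Krull--Schmidt plus the explicit indecomposables for $\PP^1$, and Atiyah's classification plus Fourier--Mukai theory for $E$, are the essential tools. Once the list of primes is pinned down, verifying openness of each extra component so that Theorem~\ref{intro2}(2) applies is a topological check, and computing stalks is a local calculation of endomorphism rings; both proceed routinely.
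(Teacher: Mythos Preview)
Your Part~(1) is correct and matches the paper's argument exactly.

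For Parts~(2) and~(3) there is a genuine gap in how you obtain the ringed space structure on the non-Balmer components. Theorem~\ref{intro2}(2) only asserts that the \emph{Balmer image} $\ttSpec(\cT)_{\red}$ is an open ringed subspace of $\tSpec(\cT)$ when it happens to be open; it says nothing about the structure sheaf on the complement. So your sentence ``whereupon Theorem~\ref{intro2}(2) promotes the set-theoretic description to an identification of ringed spaces'' does not apply to the extra pieces. Moreover, your claim that ``the stalks of the structure sheaf on the extra components reduce to endomorphism rings, which are $k$'' is fine for the isolated points $\langle\cO_{\PP^1}(n)\rangle$, but it is false for the elliptic curve: there each extra component $E_{r,d}$ is an entire curve, and the stalks are the local rings of $E$, not $k$.

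The paper closes this gap by recognizing each extra component as a Balmer image in disguise, so that Theorem~\ref{intro2} (via Corollary~\ref{imm}) can again be invoked. For $\PP^1$, the semiorthogonal decomposition gives a triangle equivalence $\dpf(\PP^1)/\langle\cO(n)\rangle\cong\dpf(\Spec k)$, and a separate lemma (Corollary~\ref{princ}) shows that quotienting by a \emph{principal} thick subcategory always yields an \emph{open immersion of ringed spaces} $\tSpec(\cT/\langle M\rangle)\hookrightarrow\tSpec(\cT)$; applying Corollary~\ref{imm} to $\Spec k$ then identifies each isolated point with $\Spec k$ as a ringed space. For $E$, the Fourier--Mukai equivalence $\Phi_{r,d}\colon\dpf(E)\xrightarrow{\ \cong\ }\dpf(M(r,d))$ induces an isomorphism of \emph{ringed spaces} $\tSpec(\dpf(M(r,d)))\cong\tSpec(\dpf(E))$, under which the Balmer image $M(r,d)\cong E_{r,d}$ (an open immersion of ringed spaces by Corollary~\ref{imm}) is carried onto the $(r,d)$-component. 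You mention Fourier--Mukai, but this transport-of-structure step is precisely what makes the argument work and needs to be made explicit. Finally, the exhaustion step you flag as the main obstacle is not reproved in the paper; it is imported from \cite{Mat} for $\PP^1$ and \cite{HO} for $E$, together with the openness of the Balmer locus from \cite[Corollary~4.7]{HO}.
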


\noindent 
Matsui \cite{Mat} and Hirano-Ouchi \cite{HO} proved that there are homeomorphisms between the underlying topological spaces of ringed spaces in Theorem \ref{intro3}(1),(2) and Theorem \ref{intro3}(3), respectively.
Therefore, Theorem \ref{intro3} extends their result to isomorphisms of ringed spaces.
Moreover, Theorem \ref{intro3}(1),(3) shows that reduced quasi-affine schemes and elliptic curves are reconstructed from their perfect derived categories using only triangulated category structure (without tensor structure).

This paper is organized as follows.
In Section 2, we give the definitions of the center and the triangular spectra of a triangulated category, which play a central role throughout this paper. 
In Section 3, we prove Theorem \ref{intro1} and give its applications, including the Bondal-Orlov-Ballard reconstruction theorem.
In Section 4, we introduce the structure sheaf on the triangular spectrum and prove Theorem \ref{intro2}.
We apply this result to the perfect derived categories of noetherian schemes and deduce Theorem \ref{intro3}.
\section{Preliminaries}

In this section, we recall basic definitions and properties for later use.
We begin with our convention.

\begin{conv}
\begin{enumerate}[\rm(1)]
\item
Throughout this paper, we assume that all triangulated categories are essentially small and that all subcategories are full. 

Let $\cT$ be a triangulated category. 
A {\it thick subcategory} of $\cT$ is a subcategory closed under direct summands, shifts, and extensions.
The set $\Th(\cT)$ of all thick subcategories forms a lattice with respect to the inclusion relation. 
For $M \in \cT$, denote by $\langle M \rangle$ the smallest thick subcategory of $\cT$ containing $M$. 
\item
For a noetherian scheme $X$, a complex $F$ of $\cO_X$-modules is said to be {\it perfect} if, for any $x \in X$, there is an open neighborhood $U \subseteq X$ of $x$ such that the restriction $F|_U$ is quasi-isomorphic to a bounded complex of locally free sheaves of finite rank.
Denote by $\dpf(X)$ the derived category of perfect complexes on $X$.
We call it the {\it perfect derived category} of $X$.
\end{enumerate}
\end{conv}

\subsection{Center of triangulated categories}

We recall the definition and basic properties of the center of a triangulated category; see \cite{KY} for details. 
 
\begin{dfn}
Let $\cT$ be a triangulated category.
\begin{enumerate}[\rm(1)]
\item
The {\it center} $\rZ(\cT)$ of $\cT$ is the set of natural transformations $\eta: \id_\cT \to \id_{\cT}$ with $\eta[1] = [1]\eta$.	
The composition of natural transformations makes $\rZ(\cT)$ a commutative ring.
\item
We say that an element $\eta \in \rZ(\cT)$ is {\it locally nilpotent} if $\eta_M$ is a nilpotent element of the endomorphism ring $\End_{\cT}(M)$ for each $M \in \cT$.
We shall denote by $\rZ(\cT)_{\mathrm{lnil}}$ the ideal of $\rZ(\cT)$ consisting of locally nilpotent elements and by $\rZ(\cT)_{\mathrm{lred}} := \rZ(\cT)/\rZ(\cT)_{\mathrm{lnil}}$ the quotient ring.
\end{enumerate}
\end{dfn}

Let $\Phi: \cT \to \cT'$ be an exact functor between triangulated categories. 
It seems to be not known whether $\Phi$ induces a morphism between $\rZ(\cT)_{\lred}$ and $\rZ(\cT')_{\lred}$ in general. 
Let us give several functoriality results of $\rZ(-)_{\lred}$ under certain functors following \cite{KY}.
 
We say that an exact functor $\Phi: \cT \to \cT'$ is {\it dense} if, for any $M' \in \cT'$, there are $M \in \cT$ and $N' \in \cT'$ such that $\Phi(M) \cong M' \oplus N'$.
For example, the idempotent completion functor $\iota: \cT \to \cT^\nat$ of $\cT$ is fully faithful and dense; see \cite{BS}. 

\if0Recall that the objects are pairs $(M, a)$ of an object $M \in \cT$ and an idempotent morphism $a: M \to M$. 
A morphism $f: (M, a) \to (N, b)$ is a morphism $f: M \to N$ such that $fa = bf = f$.
Then $\widetilde{\cT}$ is an idempotent complete category. 
Indeed, for an idempotent morphism $f: (M, a) \to (M, a)$ in $\widetilde{\cT}$, there are mutually inverse isomorphisms
$$
\left(\begin{smallmatrix}f \\ a-f\end{smallmatrix}\right) : (M, a) \cong (M, f) \oplus (M, a-f):\left(\begin{smallmatrix}f & a-f\end{smallmatrix}\right) 
$$
The assignment $M \mapsto (M, \id_M)$ defines a fully faithful additive functor $i: \cT \to \widetilde{\cT}$.
Moreover, there is a unique triangulated structure so that $i: \cT \to \widetilde{\cT}$ is exact, see \cite{BS} for details. 
The following result implies that taking idempotent completion of $\cT$ does not change its center. 
\fi

\begin{lem}\label{cntidm}
Let $\Phi: \cT \to \cT'$ be a fully faithful dense exact functor. Then there is an isomorphism 
$
\Phi^* : \rZ(\cT') \overset{\cong}{\to} \rZ(\cT),
$
where $\Phi^*(\eta)_M = \Phi^{-1}(\eta_{\Phi(M)})$ for $M \in \cT$.
Moreover, this isomorphism induces an isomorphism
$
\ol{\Phi^*} : \rZ(\cT')_{\mathrm{lred}} \overset{\cong}{\to} \rZ(\cT)_{\mathrm{lred}}.
$
\end{lem}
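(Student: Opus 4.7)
The plan is to establish in turn that $\Phi^*$ is a well-defined ring homomorphism, is injective, and is surjective; the claim about $(-)_\lred$ will then follow by tracking locally nilpotent elements through $\Phi^*$. For the first task, given $\eta \in \rZ(\cT')$, the formula $\Phi^*(\eta)_M := \Phi^{-1}(\eta_{\Phi(M)})$ makes sense by full faithfulness of $\Phi$; naturality in $M \in \cT$ follows by applying $\Phi$ to any morphism $f: M \to N$ in $\cT$, invoking naturality of $\eta$ at $\Phi(f)$, and using full faithfulness once more; shift compatibility $\Phi^*(\eta)[1] = [1]\Phi^*(\eta)$ is inherited from the analogous property of $\eta$ together with exactness of $\Phi$; preservation of addition, composition, and identity is immediate. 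For injectivity, if $\Phi^*(\eta) = 0$ then $\eta_{\Phi(M)} = 0$ for every $M \in \cT$; given $M' \in \cT'$, density supplies $M \in \cT$ and $N' \in \cT'$ with $\Phi(M) \cong M' \oplus N'$, and naturality of $\eta$ along the inclusion $M' \hookrightarrow \Phi(M)$ and projection $\Phi(M) \twoheadrightarrow M'$ forces $\eta_{M'} = 0$.

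The main obstacle is surjectivity, where given $\zeta \in \rZ(\cT)$ I must construct a lift $\tilde\zeta \in \rZ(\cT')$. For each $M' \in \cT'$ I choose, using density, $M \in \cT$, $N' \in \cT'$, and morphisms $p: \Phi(M) \to M'$, $\iota: M' \to \Phi(M)$ with $p\iota = \id_{M'}$, and set $\tilde\zeta_{M'} := p\,\Phi(\zeta_M)\,\iota$. The crucial point is independence of the choice: given another choice $(M_2, p_2, \iota_2)$, full faithfulness lifts $\iota_2 p: \Phi(M) \to \Phi(M_2)$ to some $g: M \to M_2$, and naturality of $\zeta$ at $g$ (combined with $p\iota = p_2\iota_2 = \id_{M'}$) collapses the two recipes to the same morphism. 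A parallel lifting argument, applied to a morphism $h: M' \to N'$ in $\cT'$ via the auxiliary map $\iota_N h p_M: \Phi(M) \to \Phi(N)$, delivers naturality of $\tilde\zeta$; shift compatibility is immediate by taking the splittings at $M'[1]$ to be the shifts of those at $M'$; and $\Phi^*(\tilde\zeta) = \zeta$ is seen by choosing $(M,p,\iota) = (M,\id,\id)$ at objects of the form $\Phi(M)$.

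The remaining task is to compare the locally nilpotent ideals under $\Phi^*$. The inclusion $\Phi^*(\rZ(\cT')_\lnil) \subseteq \rZ(\cT)_\lnil$ is obvious; for the converse, writing the idempotent $\iota p$ as $\Phi(e)$ for some $e: M \to M$ (possible by full faithfulness), centrality of $\zeta$ gives $\zeta_M e = e \zeta_M$, so $\Phi(\zeta_M)$ commutes with $\iota p$. An easy induction then produces $\tilde\zeta_{M'}^n = p\,\Phi(\zeta_M^n)\,\iota$, so nilpotence of $\zeta_M$ implies nilpotence of $\tilde\zeta_{M'}$. Thus $\Phi^*$ identifies the two locally nilpotent ideals, and passing to the quotient yields the induced isomorphism $\ol{\Phi^*}: \rZ(\cT')_\lred \xrightarrow{\cong} \rZ(\cT)_\lred$.
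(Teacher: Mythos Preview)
Your proof is correct and follows essentially the same strategy as the paper's: both establish injectivity via density, construct the inverse explicitly by choosing a splitting $M'$ as a summand of some $\Phi(M)$ and projecting $\Phi(\zeta_M)$ to the $M'$-component, and verify well-definedness and naturality by lifting the relevant composites through the full faithfulness of $\Phi$. The only notable difference is that the paper invokes the Balmer trick $M' \oplus M'[1] \cong \Phi(M)$ (so the complementary summand is controlled), whereas you work with an arbitrary splitting $\Phi(M) \cong M' \oplus N'$; your idempotent-commutation argument for the $\mathrm{lred}$ part then replaces the paper's more direct observation that a summand of a nilpotent endomorphism is nilpotent. These are minor technical variations on the same idea.
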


\begin{proof}
We note that for any object $M' \in \cT'$, there is an object $M \in \cT$ and an isomorphism $M' \oplus M'[1] \cong \Phi(M)$; see \cite[(3.2) in the proof of Proposition 3.13]{Bal05}.

As $\Phi: \cT \to \cT'$ is a fully faithful exact functor, it induces a homomorphism $\Phi^*: \rZ(\cT') \to \rZ(\cT)$ by \cite[Proposition 2.3(1)]{KY}.
First, we prove that this homomorphism is an isomorphism.
Let $\eta \in \rZ(\cT')$ with $\Phi^*(\eta) = 0$.
For any $M' \in \cT'$, take an object $M \in \cT$ and an isomorphism $M' \oplus M'[1] \cong \Phi(M)$.
Then we obtain $\Phi^*(\eta)_M = \Phi^{-1}(\eta_{\Phi(M)}) \cong \Phi^{-1}(\eta_{M'}) \oplus \Phi^{-1}(\eta_{M'})[1]$.
Therefore, $\Phi^*(\eta)_M =0$ implies $\Phi^{-1}(\eta_{M'})=0$ and hence $\eta_{M'} = 0$.
This shows that $\Phi^*: \rZ(\cT') \to \rZ(\cT)$ is injective. 
To show that $\Phi^*: \rZ(\cT') \to \rZ(\cT)$ is surjective, fix an element $\delta \in \rZ(\cT)$ and construct $\eta \in \rZ(\cT')$ with $\Phi^*(\eta)=\delta$. 
For any $M' \in \cT'$, take an object $M \in \cT$ and an isomorphism $\varphi: \Phi(M) \xrightarrow{\cong} M' \oplus M'[1]$.
Then we define the morphism $\eta_{M'}: M' \to M'$ as the composition
$$
M' \overset{\left(\begin{smallmatrix}1 \\ 0 \end{smallmatrix}\right)}{\to} M' \oplus M'[1] \xrightarrow{\varphi^{-1}} \Phi(M) \xrightarrow{\Phi(\delta_M)} \Phi(M) \xrightarrow{\varphi} M' \oplus M'[1] \overset{\left(\begin{smallmatrix}1 & 0 \end{smallmatrix}\right)}{\to} M'.
$$
This morphism $\eta_{M'}$ does not depend on the choices of $M$ and $\varphi$.
Indeed, take an object $N \in \cT$ and an isomorphism $\psi: \Phi(N) \xrightarrow{\cong} M' \oplus M'[1]$.
Since $\Phi$ is full, there is a morphism $f: M \to N$ in $\cT$ such that $\Phi(f) = \psi^{-1}\varphi$.
Then we have the equalities
$$
\varphi \Phi(\delta_M) \varphi^{-1} = \psi \Phi(f) \Phi(\delta_M) \varphi^{-1} = \psi \Phi(\delta_N) \Phi(f) \varphi^{-1} = \psi \Phi(\delta_N)\psi^{-1}.
$$ 
For this reason, $\eta_{M'}$ is well-defined.
For a morphism $g: M' \to N'$ in $\cT'$, we prove $g \eta_{M'} = \eta_{N'}g$.
Take objects $M, N \in \cT$ and isomorphisms $\varphi: \Phi(M) \xrightarrow{\cong} M' \oplus M'[1]$, $\psi: \Phi(N) \xrightarrow{\cong} N' \oplus N'[1]$ in $\cT'$.
Since $\Phi$ is full, there is a morphism $f: M \to N$ such that $\Phi(f) = \psi^{-1}\left(\begin{smallmatrix}g & 0 \\ 0 & g[1] \end{smallmatrix}\right)\varphi$.
Then we get the equalities
\begin{align*}
g \eta_{M'} &= g\left(\begin{smallmatrix}1 & 0 \end{smallmatrix}\right) \varphi \Phi(\delta_M)\varphi^{-1}\left(\begin{smallmatrix}1 \\ 0 \end{smallmatrix}\right) \\
&= \left(\begin{smallmatrix}1 & 0 \end{smallmatrix}\right) \left(\begin{smallmatrix}g & 0 \\ 0 & g[1] \end{smallmatrix}\right)\varphi \Phi(\delta_M)\varphi^{-1}\left(\begin{smallmatrix}1 \\ 0 \end{smallmatrix}\right) \\
&= \left(\begin{smallmatrix}1 & 0 \end{smallmatrix}\right) \psi \Phi(f)\Phi(\delta_M)\varphi^{-1}\left(\begin{smallmatrix}1 \\ 0 \end{smallmatrix}\right) \\
&= \left(\begin{smallmatrix}1 & 0 \end{smallmatrix}\right) \psi \Phi(\delta_N)\Phi(f)\varphi^{-1}\left(\begin{smallmatrix}1 \\ 0 \end{smallmatrix}\right) \\
&= \left(\begin{smallmatrix}1 & 0 \end{smallmatrix}\right) \psi \Phi(\delta_N)\psi^{-1}\left(\begin{smallmatrix}g & 0 \\ 0 & g[1] \end{smallmatrix}\right)\left(\begin{smallmatrix}1 \\ 0 \end{smallmatrix}\right) \\
&= \left(\begin{smallmatrix}1 & 0 \end{smallmatrix}\right) \psi \Phi(\delta_N)\psi^{-1}\left(\begin{smallmatrix}1 \\ 0 \end{smallmatrix}\right) g = \eta_{N'}g.
\end{align*}
This shows that $\eta: \id_{\cT'} \to \id_{\cT'}$ is a natural transformation.
Moreover, one can easily see from the definition that $\eta[1]=[1]\eta$ holds and hence $\eta \in \rZ(\cT')$.
Finally, we will check the equality $\Phi^*(\eta) = \delta$. For an object $M \in \cT$, we can take the canonical isomorphism $\Phi(M \oplus M[1]) \cong \Phi(M) \oplus \Phi(M)[1]$.
Using this isomorphism, we have a commutative diagram
$$
\xymatrix{
\Phi(M) \ar[r]^-{\left(\begin{smallmatrix}1 \\ 0 \end{smallmatrix}\right)} \ar[d]_{\Phi(\delta_M)} & \Phi(M) \oplus \Phi(M)[1] \ar@{}[r]|-*[@]{\cong}\ar[d]^{\left(\begin{smallmatrix}\Phi(\delta_M) & 0 \\ 0 & \Phi(\delta_M)[1] \end{smallmatrix}\right)} & \Phi(M \oplus M[1]) \ar[d]^{\Phi(\delta_{M \oplus M[1]})} \\
\Phi(M) & \Phi(M) \oplus \Phi(M)[1] \ar[l]^-{\left(\begin{smallmatrix}1 & 0 \end{smallmatrix}\right)} \ar@{}[r]|-*[@]{\cong} & \Phi(M \oplus M[1]) 
}
$$
and this means that $\eta_{\Phi(M)} = \Phi(\delta_M)$. 
Therefore, $\Phi^*(\eta)_M = \Phi^{-1}(\eta_{\Phi(M)}) = \delta_M$.
Thus, we conclude that $\Phi^*(\eta)=\delta$.

We finish the proof by checking that the first isomorphism induces the second one.
From the first isomorphism, the induced homomorphism $\ol{\Phi^*} : \rZ(\cT')_{\mathrm{lred}} \twoheadrightarrow \rZ(\cT)_{\mathrm{lred}}$ is surjective.
Pick an element $\eta \in \rZ(\cT')$ with $\Phi^*(\eta) \in \rZ(\cT)_{\mathrm{lnil}}$. 
For any object $M \in \cT$, there is an integer $n>0$ such that $\Phi^{-1}(\eta_{\Phi(M)}^n) = \Phi^{-1}(\eta_{\Phi(M)})^n = \Phi^*(\eta)_M^n = 0$ as $\Phi^*(\eta)$ is locally nilpotent.
Hence $\eta_{\Phi(M)}^n = 0$.
Because each object $M' \in \cT'$ is a direct summand of $\Phi(M)$ for some $M \in \cT$, we get $\eta \in \rZ(\cT')_{\mathrm{lred}}$.
This shows the injectivity of $\ol{\Phi^*} : \rZ(\cT')_{\mathrm{lred}} \twoheadrightarrow \rZ(\cT)_{\mathrm{lred}}$.
\end{proof}

For thick subcategories $\cU \subseteq \cV$ of $\cT$, there is a unique exact functor $Q_{\cV/\cU}: \cT/\cU \to \cT/\cV$ such that $Q_{\cV/\cU} \circ Q_{\cU} = Q_{\cV}$, where $Q_{\cU}:\cT \to \cT/\cU$ and $Q_{\cV}:\cT \to \cT/\cV$ are the canonical functors. 

\if0
\begin{lem}\label{isothm3}
Let $\cU \subseteq \cV$ be thick subcategories of $\cT$.
Then the exact functor $Q_{\cU/\cV}: \cT/\cU \to \cT/\cV$ induces a triangle equivalence
$$
(\cT/\cU)/(\cV/\cU) \cong \cT/ \cV.
$$
\end{lem}

\begin{proof}
It suffices to check that the exact functor $Q_{\cU/\cV}: \cT/\cU \to \cT/\cV$ satisfies the universal property of the Verdier quotient of $\cT/\cU$ by $\cV/\cU$.

Let $F: \cT/\cU \to \cX$ be an exact functor between triangulated categories such that $F(\cV/\cU) = \bf{0}$.
Then the composition $F \circ Q_{\cU}: \cT \to \cT/\cU \to \cX$ sends $\cV$ to $\bf{0}$ and hence it induces a unique exact functor $\ol{F}: \cT/\cV \to \cX$ such that $\ol{F} \circ Q_{\cV} = F \circ Q_{\cU}$.
Therefore, one has $\ol{F} \circ Q_{\cU/\cV} \circ Q_{\cU} = \ol{F} \circ Q_{\cV} = F \circ Q_{\cU}$ and this implies (from the universal property of the Verdier quotient of $\cT$ by $\cU$) that $\ol{F} \circ Q_{\cU/\cV}=F$.
Consequently, $\cT/\cV$ is the Verdier quotient of $\cT/\cU$ by $\cV/\cU$ and hence $Q_{\cU/\cV}$ induces an equivalence $(\cT/\cU)/(\cV/\cU) \cong \cT/ \cV$ of triangulated categories.
\end{proof}
\fi 

\begin{lem}\label{cntquot}
Let $\cU \subseteq \cV$ be thick subcategories of $\cT$.
Then the canonical functor $Q_{\cV/\cU}: \cT/\cU \to \cT/\cV$ induces a ring homomorphism
$
(Q_{\cV/\cU})_*: \rZ(\cT/\cU) \to \rZ(\cT/\cV)
$
where $(Q_{\cV/\cU})_*(\eta)_{Q_{\cV}(M)} = Q_{\cV/\cU}(\eta_{Q_{\cU}(M)})$ for $M \in \cT$.
Moreover, this homomorphism induces a homomorphism
$
\ol{(Q_{\cV/\cU})_*}: \rZ(\cT/\cU)_{\mathrm{lred}} \to \rZ(\cT/\cV)_{\mathrm{lred}} 
$.
\end{lem}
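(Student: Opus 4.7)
The plan is to define $(Q_{\cV/\cU})_*(\eta)$ pointwise, exploiting that a Verdier localization leaves the class of objects unchanged. Every object of $\cT/\cV$ is of the form $Q_\cV(M) = Q_{\cV/\cU}(Q_\cU(M))$ for some $M \in \cT$, so for $\eta \in \rZ(\cT/\cU)$ the formula $(Q_{\cV/\cU})_*(\eta)_{Q_\cV(M)} := Q_{\cV/\cU}(\eta_{Q_\cU(M)})$ specifies a candidate morphism at each object of $\cT/\cV$. The content of the lemma is then to check that this family is natural and shift-compatible, that the resulting assignment is a ring homomorphism, and finally that it descends modulo $\rZ(-)_{\lnil}$.

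For naturality, I would use that the equality $Q_{\cV/\cU} \circ Q_\cU = Q_\cV$ realizes $Q_{\cV/\cU}$ as the Verdier quotient of $\cT/\cU$ by $\cV/\cU$. Consequently, every morphism $f: Q_\cV(M) \to Q_\cV(N)$ in $\cT/\cV$ admits a roof representation $f = Q_{\cV/\cU}(g) \circ Q_{\cV/\cU}(s)^{-1}$, where $s: Q_\cU(P) \to Q_\cU(M)$ and $g: Q_\cU(P) \to Q_\cU(N)$ are morphisms in $\cT/\cU$ and the cone of $s$ lies in $\cV/\cU$. Centrality of $\eta$ in $\cT/\cU$ gives $\eta_{Q_\cU(N)}\, g = g\, \eta_{Q_\cU(P)}$ and $\eta_{Q_\cU(M)}\, s = s\, \eta_{Q_\cU(P)}$. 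Applying $Q_{\cV/\cU}$ and using that $Q_{\cV/\cU}(s)$ becomes invertible, a short diagram chase yields $(Q_{\cV/\cU})_*(\eta)_{Q_\cV(N)} \circ f = f \circ (Q_{\cV/\cU})_*(\eta)_{Q_\cV(M)}$, which is the required naturality square.

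Compatibility with the shift is immediate because $Q_{\cV/\cU}$ is exact, so $\eta[1] = [1]\eta$ in $\rZ(\cT/\cU)$ passes through $Q_{\cV/\cU}$ unchanged. Multiplicativity of $(Q_{\cV/\cU})_*$ is a direct consequence of the functoriality of $Q_{\cV/\cU}$ on identities and composition. To obtain the induced map on the locally reduced quotients, I would observe that if $\eta \in \rZ(\cT/\cU)_{\lnil}$, then for each $M \in \cT$ some power $\eta_{Q_\cU(M)}^n$ vanishes, whence $(Q_{\cV/\cU})_*(\eta)_{Q_\cV(M)}^n = Q_{\cV/\cU}(\eta_{Q_\cU(M)}^n) = 0$; since every object of $\cT/\cV$ is a $Q_\cV(M)$, local nilpotence is preserved and $\ol{(Q_{\cV/\cU})_*}$ is well defined.

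The main obstacle I expect is the naturality verification: it requires writing an arbitrary morphism in $\cT/\cV$ as a roof from $\cT/\cU$ and then commuting $\eta$ past both the forward arrow $g$ and the wrong-way arrow $s$, the latter being only invertible after applying $Q_{\cV/\cU}$. Everything else is either formal (shift, ring structure) or a one-line consequence of the fact that Verdier quotients are bijections on objects.
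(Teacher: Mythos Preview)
Your argument is correct. Both you and the paper rely on the identification of $Q_{\cV/\cU}$ with the Verdier quotient of $\cT/\cU$ by $\cV/\cU$, but you then diverge in how you extract the map on centers. The paper factors $Q_{\cV/\cU}$ as the quotient $Q:\cT/\cU \to (\cT/\cU)/(\cV/\cU)$ followed by the triangle equivalence $\Phi:(\cT/\cU)/(\cV/\cU)\xrightarrow{\cong}\cT/\cV$ (citing Verdier), and then defines $(Q_{\cV/\cU})_*$ as the composite $(\Phi^*)^{-1}\circ Q_*$, invoking \cite[Proposition~2.3(2)]{KY} for $Q_*$ and Lemma~\ref{cntidm} for the equivalence step; the explicit formula and the descent to $\rZ(-)_{\lred}$ are then read off. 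Your approach instead verifies naturality directly via a roof calculation, which is essentially the content of the Krause--Ye result unpacked for this particular quotient. The paper's route is shorter and more modular (it outsources the naturality check to the literature), while yours is more self-contained and makes the explicit formula for $(Q_{\cV/\cU})_*(\eta)$ transparent from the outset rather than something to be verified after the fact.
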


\begin{proof}
It follows from \cite[Proposition 2.3.1]{Ver} that $\cV/\cU$ is a thick subcategory of $\cT/\cU$ and that $Q_{\cV/\cU}$ induces a triangle equivalence
$$
\Phi: (\cT/\cU)/(\cV/\cU) \xrightarrow{\cong} \cT/ \cV
$$
such that $\Phi \circ Q = Q_{\cV/\cU}$, where $Q: \cT/\cU \to (\cT/\cU)/(\cV/\cU)$ is the canonical functor.
By Lemma \ref{cntidm} and \cite[Proposition 2.3(2)]{KY}, we obtain the homomorphism
$$
(Q_{\cV/\cU})_* : \rZ(\cT/\cU) \xrightarrow{Q_*} \rZ((\cT/\cU)/(\cV/\cU)) \underset{\cong} {\xrightarrow{(\Phi^*)^{-1}}} \rZ(\cT/\cV).
$$
The second statement is clear from the description $(Q_{\cV/\cU})_*(\eta)_{Q_{\cV}(M)} = Q_{\cV/\cU}(\eta_{Q_{\cU}(M)})$.
\end{proof}

\subsection{Spectra of triangulated categories}

In this subsection, let us recall the two kinds of spectra introduced by Balmer \cite{Bal05} and Matsui \cite{Mat}.

Let $\cT$ be a triangulated category.
We set
$$
\sZ(\cE) := \{\cX \in \Th(\cT) \mid \cX \cap \cE = \emptyset\}
$$
for a subcategory $\cE$ of $\cT$.
Then we can easily see the following properties:
\begin{enumerate}[\rm(i)]
\item
$\sZ(\cT) = \emptyset$ and $\sZ(\emptyset) = \Th(\cT)$.
\item
$\bigcap_{i \in I} \sZ(\cE_i) = \sZ(\bigcup_{i \in I} \cE_i)$. 
\item	
$\sZ(\cE) \cup \sZ(\cE') = \sZ(\cE \oplus \cE')$, where $\cE \oplus \cE' := \{M \oplus M' \mid M \in \cE, M' \in \cE'\}$. 
\end{enumerate}
Therefore, we can define the topology on $\Th(\cT)$ whose closed subsets are of the form $\sZ(\cE)$ for some $\cE \subseteq \cT$.

A {\it tensor triangulated category} is a triple $(\cT, \otimes, \one)$ consisting of a triangulated category $\cT$ together with a symmetric monoidal structure $(\otimes, \one)$ such that the bifunctor $\otimes: \cT \times \cT \to \cT$ is exact in each variable.
 
\begin{dfn}(\cite[Definition 2.1]{Bal05}).
Let $(\cT, \otimes, \one)$ be a tensor triangulated category.
A proper thick subcategory $\cP \subsetneq \cT$ is said to be a {\it prime thick ideal} if it satisfies the following conditions:  
\begin{itemize}
\item (ideal)
$M \otimes N \in \cP$ holds for any $M \in \cT$ and $N \in \cP$,
\item (prime)
$M \otimes N \in \cP$ implies $M \in \cP$ or $N \in \cP$.
\end{itemize}
Denote by $\ttSpec(\cT)$ the set of prime thick ideals of $\cT$ together with the induced topology of $\Th(\cT)$.
We call $\ttSpec(\cT)$ the {\it Balmer spectrum} or the {\it tensor-triangular spectrum} of $(\cT, \otimes, \one)$.
\end{dfn}

Balmer \cite{Bal05} also defined the structure sheaf on $\ttSpec(\cT)$.
For an open subset $U \subseteq \ttSpec(\cT)$, set $\cT^U := \bigcap_{\cP \in U} \cP$.
We define the tensor triangulated category $\cT(U)$ by
$$
\cT(U) := \left(\cT/\cT^U\right)^{\nat}.
$$
Denote by $\one_U$ the image of $\one$ under the canonical functor $\cT \to \cT(U)$.

\begin{dfn}(\cite[Definition 6.1]{Bal05}).
We denote by $\cO_{\otimes, \cT}$ the sheafification of the presheaf $\cO^p_{\otimes, \cT}$ of commutative rings given by the assignment
$$
U \mapsto \cO^p_{\otimes, \cT}(U) := \End_{\cT(U)}(\one_U).
$$
We simply write $\ttSpec(\cT)$ for the ringed space $(\ttSpec(\cT), \cO_{\otimes, \cT})$.
\end{dfn}

Using the Balmer spectrum, Balmer \cite{Bal05} proved that any noetherian scheme $X$ could be reconstructed from the tensor triangulated category $(\dpf(X), \ltensor_{\cO_{X}}, \cO_X)$.

\begin{thm}$($\cite[Theorem 6.3(a)]{Bal05}$)$. \label{balm}
For a noetherian scheme $X$, there is an isomorphism
$$
\cS_X: X \xrightarrow{\cong} \ttSpec(\dpf(X))
$$	
of ringed spaces, where the map of underlying topological spaces is given by
$$
x \mapsto \cS_X(x) := \{\cF \in \dpf(X) \mid \cF_x \cong 0 \mbox{ in } \dpf(\cO_{X,x})\}.
$$
\end{thm}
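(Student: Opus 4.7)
The plan is to establish the ringed-space isomorphism in two stages: first on the underlying topological spaces, then on the structure sheaves.

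For the topological statement, I would first verify that $\cS_X(x)$ is a prime thick ideal for every $x \in X$. Thickness and the ideal property follow from the exactness and tensor-compatibility of the stalk functor $(-)_x : \dpf(X) \to \dpf(\cO_{X,x})$. Primality reduces to the local claim that for a noetherian local ring $(R, \fm, k)$ and $M, N \in \dpf(R)$ with $M \ltensor_R N \cong 0$, either $M$ or $N$ is acyclic. I would prove this by tensoring further with $k$ and using
$$
(M \ltensor_R k) \otimes_k^{\LL} (N \ltensor_R k) \cong M \ltensor_R N \ltensor_R k \cong 0;
$$
since the factors are bounded complexes of finite-dimensional $k$-vector spaces, the K\"unneth formula forces one of them to vanish, and a minimal free resolution together with Nakayama's lemma then forces the corresponding $M$ or $N$ to be zero in $\dpf(R)$.

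Next, I would invoke Thomason's classification of thick tensor ideals of $\dpf(X)$ in terms of specialization-closed subsets of $X$. Under this classification, prime thick ideals correspond bijectively to irreducible closed subsets, and hence to points of $X$ via the generic-point map; a direct comparison shows that the resulting bijection is precisely $\cS_X$. The topologies match via the identity $\cS_X^{-1}(\ttSupp(\cF)) = \mathrm{Supp}_X(\cF)$, which identifies basic closed sets on the two sides, so $\cS_X$ is a homeomorphism.

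For the structure sheaves, I would compute sections on the basic open $U(\cF) := \ttSpec(\dpf(X)) \setminus \ttSupp(\cF)$, which under $\cS_X$ corresponds to the quasi-compact open $V := X \setminus \mathrm{Supp}_X(\cF)$. By Thomason--Neeman localization, the idempotent completion of the Verdier quotient of $\dpf(X)$ by the thick tensor ideal generated by $\cF$ is symmetric-monoidally equivalent to $\dpf(V)$, matching $\cT(U(\cF))$, so that
$$
\cO^p_{\otimes, \dpf(X)}(U(\cF)) = \End_{\cT(U(\cF))}(\one_{U(\cF)}) \cong \End_{\dpf(V)}(\cO_V) \cong \Gamma(V, \cO_X).
$$
The main obstacle is verifying that these equivalences are natural in the restriction maps between basic opens $U(\cF) \supseteq U(\cG)$, so that the ring isomorphisms patch into a morphism of presheaves; once naturality is in place, the fact that $\Gamma(-, \cO_X)$ is already a sheaf on the basis of quasi-compact opens ensures that sheafification yields the desired ringed-space isomorphism $\cO_{\otimes, \dpf(X)} \cong (\cS_X)_* \cO_X$.
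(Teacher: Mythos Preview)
The paper does not give its own proof of this statement: Theorem~\ref{balm} is stated with the citation \cite[Theorem 6.3(a)]{Bal05} and no proof follows; it is quoted as a known result of Balmer and used as input to the rest of the paper. So there is nothing in the paper to compare your argument against.

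That said, your outline is a faithful reconstruction of the standard proof. The primality check via K\"unneth over the residue field and Nakayama is the usual local argument; the bijection and the matching of topologies go through Thomason's classification of thick $\otimes$-ideals exactly as you describe (this is how Balmer proves \cite[Corollary 5.6]{Bal05}); and the identification of sections on basic opens via the Thomason--Neeman localization $\bigl(\dpf(X)/\mathrm{D}^{\mathrm{pf}}_{Z}(X)\bigr)^{\natural} \simeq \dpf(V)$ is precisely \cite[Theorem 2.13]{Bal02}, which the present paper also invokes repeatedly. The one point you flag as an ``obstacle'', namely naturality of the section isomorphisms in inclusions of basic opens, is genuinely where the bookkeeping lives, but it is routine once one notes that both restriction maps are induced by the same Verdier quotient functors, so the squares commute by the universal property of the quotient. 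Nothing in your sketch is wrong or missing a key idea.
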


Next, let us recall the triangular spectrum of a triangulated category of $\cT$.

\begin{dfn}(\cite[Definitions 2.2 and 2.4]{Mat}).
Let $\cT$ be a triangulated category.
A proper thick subcategory $\cP \subsetneq \cT$ is called a {\it prime thick subcategory} if the partially ordered set $\{\cX \in \Th(\cT) \mid \cP \subsetneq \cX \}$ has the smallest element. 
Denote by $\tSpec(\cT)$ the set of prime thick subcategories of $\cT$ together with the induced topology of $\Th(\cT)$.
We call $\tSpec(\cT)$ the {\it triangular spectrum} of $\cT$.
\end{dfn}

\begin{rem}
In \cite{Mat}, we call $\cP$ a prime thick subcategory if $\{\cX \in \Th(\cT) \mid \cP \subsetneq \cX \}$ has a {\em unique minimal element}.
However, correctly it should have the {\em smallest element} as above.
Actually, the proofs in \cite{Mat} are done using the above definition (``unique minimal element” in \cite[Lemma 2.15 and Proposition 4.7]{Mat} should be changed to ``smallest element”). 
Therefore, we changed the definition of a prime thick subcategory.

\end{rem}

Let $\Phi: \cT \to \cT'$ be an exact functor.
Since $\Phi^{-1}(\cX):= \{M \in \cT \mid \Phi(M) \in \cT'\}$ is a thick subcategory of $\cT$ for each thick subcategory $\cX \subseteq \cT'$, we have an order-preserving map
$$
\Phi^*: \Th(\cT') \to \Th(\cT), \quad \cX \mapsto \Phi^{-1}(\cX).
$$
This map restricts to continuous maps between triangular spectra for fully faithful dense exact functors and quotient functors.

\begin{lem}$($\cite[Proposition 2.11]{Mat}$)$. \label{idm}
Let $\Phi: \cT \to \cT'$ be a fully faithful dense exact functor.
Then the map $\Phi^*: \Th(\cT') \to \Th(\cT)$ restricts to a homeomorphism
$$
\Phi^*: \tSpec(\cT') \overset{\cong}{\to} \tSpec(\cT).
$$ 
\end{lem}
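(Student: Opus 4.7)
The plan is to show that $\Phi^{\ast}$ restricts to a bijection $\tSpec(\cT') \to \tSpec(\cT)$ which is a homeomorphism, by factoring the argument into three steps: (1) establish that $\Phi^{\ast}$ is an order isomorphism $\Th(\cT') \to \Th(\cT)$; (2) transfer this to a homeomorphism for the topology generated by the $\sZ$-closed sets; (3) check that the bijection preserves the prime property.

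For Step (1), the crucial input is the fact already invoked in the proof of Lemma \ref{cntidm}: every object $M' \in \cT'$ satisfies $M' \oplus M'[1] \cong \Phi(M)$ for some $M \in \cT$. This shows that $\Phi^{\ast}$ is injective: if $\Phi^{\ast}(\cX') = \Phi^{\ast}(\cY')$, then for every $M' \in \cX'$, pick $M \in \cT$ with $M' \oplus M'[1] \cong \Phi(M)$; thickness of $\cX'$ gives $\Phi(M) \in \cX'$, hence $M \in \Phi^{\ast}(\cX') = \Phi^{\ast}(\cY')$, and then by thickness $M' \in \cY'$. For surjectivity, given $\cX \in \Th(\cT)$, define $\cX' \subseteq \cT'$ to be the thick subcategory generated by $\Phi(\cX)$; using the direct-summand observation above plus full faithfulness, one checks $\Phi^{\ast}(\cX') = \cX$. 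Monotonicity of $\Phi^{\ast}$ and of its inverse is clear.

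For Step (2), I would observe that for any $\cE \subseteq \cT$, the preimage satisfies $(\Phi^{\ast})^{-1}(\sZ(\cE)) = \sZ(\Phi(\cE))$ by direct unwinding, so $\Phi^{\ast}$ is continuous. Conversely, given $\cE' \subseteq \cT'$, for each $M' \in \cE'$ choose $M \in \cT$ with $M' \oplus M'[1] \cong \Phi(M)$ and let $\cE$ be the collection of such $M$. As in Step (1), $M' \in \cX'$ is equivalent to $M \in \Phi^{\ast}(\cX')$, so $\Phi^{\ast}(\sZ(\cE')) = \sZ(\cE)$. Thus $\Phi^{\ast}$ is a closed continuous bijection, hence a homeomorphism on the whole of $\Th$.

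For Step (3), I would use that primality in $\tSpec$ is defined purely in terms of the poset structure of $\Th$: namely $\cP$ is prime iff $\cP \subsetneq \cT$ and the set of proper thick overcategories of $\cP$ has a smallest element. Since $\Phi^{\ast}: \Th(\cT') \to \Th(\cT)$ is an order isomorphism sending $\cT'$ to $\cT$, it sends primes to primes in both directions, so the topological homeomorphism restricts to one between triangular spectra. The main obstacle, honestly the only nontrivial point, is verifying the bijection in Step (1); once one has Balmer's direct-summand observation in hand, the remaining steps are formal and essentially carry over from the case of idempotent completion treated in Balmer-Schlichting.
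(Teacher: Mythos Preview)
Your argument is correct. The paper does not supply its own proof of this lemma; it merely cites \cite[Proposition 2.11]{Mat}. Your three-step outline is the standard route and matches what that reference does: the Thomason--Balmer--Schlichting bijection $\Th(\cT')\cong\Th(\cT)$ via $\Phi^{*}$ (your Step~(1)), the observation that primality is a purely order-theoretic condition on $\Th$ (your Step~(3)), and compatibility with the $\sZ(-)$-topology (your Step~(2)). The one place where you are slightly terse is the surjectivity in Step~(1): the point is that because $\Phi$ is fully faithful and exact, the essential image $\Phi(\cX)$ is already a triangulated subcategory of $\cT'$, so its thick closure $\cX'$ consists precisely of direct summands of objects of $\Phi(\cX)$; full faithfulness then pulls a splitting $\Phi(M)\mid\Phi(N)$ back to $M\mid N$ in $\cT$, giving $\Phi^{*}(\cX')\subseteq\cX$. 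With that made explicit, nothing is missing.
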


\begin{lem}$($\cite[Proposition 2.9]{Mat}$)$. \label{quot}
Let $\cT$ be a triangulated category and $\cU$ be its thick subcategory.
Denote by $Q: \cT \to \cT/\cK$ the canonical functor.
Then the map $Q^*: \Th(\cT/\cK) \to \Th(\cT)$ restricts to an immersion
$$
Q^*: \tSpec(\cT/\cK) \hookrightarrow \tSpec(\cT).
$$ 
of topological spaces whose image is $\{\cP \in \tSpec(\cT) \mid \cK \subseteq \cP\}$.
\end{lem}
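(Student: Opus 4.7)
The plan is to transfer the prime condition across the well-known order-isomorphism between thick subcategories of a Verdier quotient and thick subcategories of the ambient category containing the kernel, and then check that this bijection is a homeomorphism onto its image by directly matching closed sets.

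First I would recall from Verdier's thesis that the assignment $\cX \mapsto Q^{-1}(\cX)$ defines an order-preserving bijection
\[
Q^*: \Th(\cT/\cK) \xrightarrow{\cong} \{\cY \in \Th(\cT) \mid \cK \subseteq \cY\},
\]
with inverse given by $\cY \mapsto Q(\cY)$ (taking the image as a subcategory of $\cT/\cK$, i.e.\ the thick subcategory generated by the objects $Q(Y)$ for $Y \in \cY$). In particular $Q^*$ preserves proper inclusions, so the map restricts to a bijection between proper thick subcategories of $\cT/\cK$ and proper thick subcategories of $\cT$ containing $\cK$.

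Next I would transfer the prime condition. Suppose $\cP \in \tSpec(\cT/\cK)$, and set $\cP' := Q^{-1}(\cP)$. The key observation is that since $\cK \subseteq \cP'$, every $\cY \in \Th(\cT)$ with $\cP' \subsetneq \cY$ automatically satisfies $\cK \subseteq \cY$, so
\[
\{\cY \in \Th(\cT) \mid \cP' \subsetneq \cY\} = \{\cY \in \Th(\cT) \mid \cP' \subsetneq \cY,\ \cK \subseteq \cY\}.
\]
Under the order-isomorphism of the previous paragraph, the right-hand set corresponds bijectively (and order-isomorphically) to $\{\cX \in \Th(\cT/\cK) \mid \cP \subsetneq \cX\}$, which has a smallest element by assumption. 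Hence $\cP'$ is a prime thick subcategory of $\cT$. Conversely, for any $\cP'' \in \tSpec(\cT)$ with $\cK \subseteq \cP''$, the same argument shows $Q(\cP'') \in \tSpec(\cT/\cK)$, so $Q^*$ restricts to a bijection onto $\{\cP \in \tSpec(\cT) \mid \cK \subseteq \cP\}$ as required.

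Finally I would check the topology. For continuity, given $\cE \subseteq \cT$, the preimage $(Q^*)^{-1}(\sZ(\cE) \cap \mathrm{Im}(Q^*))$ is exactly $\sZ(Q(\cE))$ in $\Th(\cT/\cK)$, since $Q^{-1}(\cX) \cap \cE = \emptyset$ is equivalent to $\cX \cap Q(\cE) = \emptyset$ (using that thick subcategories are closed under isomorphism and the essential surjectivity of $Q$). For the openness of $Q^*$ onto its image, given $\cF \subseteq \cT/\cK$, lift $\cF$ to any $\widetilde{\cF} \subseteq \cT$ with $Q(\widetilde{\cF}) = \cF$; then a thick subcategory $\cX$ of $\cT/\cK$ meets $\cF$ iff $Q^{-1}(\cX)$ meets $\widetilde{\cF}$, so
\[
Q^*(\sZ(\cF)) = \sZ(\widetilde{\cF}) \cap \mathrm{Im}(Q^*).
\]
Intersecting with $\tSpec(\cT)$ and $\tSpec(\cT/\cK)$ respectively gives the desired homeomorphism onto $\{\cP \in \tSpec(\cT) \mid \cK \subseteq \cP\}$. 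The only step requiring real care is the transfer of the ``smallest element'' condition, which works precisely because containment of $\cK$ is automatic above any superset of $\cP'$ — everything else is a direct bookkeeping check.
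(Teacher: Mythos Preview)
The paper does not actually give its own proof of this lemma; it is stated with a citation to \cite[Proposition 2.9]{Mat} and no proof environment follows. Your argument is correct and is exactly the standard one: the Verdier order-isomorphism $\Th(\cT/\cK) \cong \{\cY \in \Th(\cT) \mid \cK \subseteq \cY\}$ transports the ``smallest element above'' condition in both directions because every thick subcategory strictly containing $Q^{-1}(\cP)$ automatically contains $\cK$, and the topological part is a direct computation with the basic closed sets $\sZ(-)$. This is precisely the approach taken in the cited reference, so there is nothing to contrast.
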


The following theorem is one of the main results in \cite{Mat}, which is some sort of a generalization of Theorem \ref{balm}.

\begin{thm}\label{mat}
Let $X$ be a noetherian scheme.
\begin{enumerate}[\rm(1)]
\item
Let $\cP$ be a thick ideal of $\dpf(X)$.
Then $\cP$ is a prime thick subcategory if and only if $\cP$ is a prime thick ideal if and only if $\cP = \cS_X(x)$ for some $x \in X$.
\item
We have an immersion 
$$
\cS_X: X \hookrightarrow \tSpec(\dpf(X)), \quad x \mapsto \cS_X(x).
$$	
of topological spaces whose image is $\ttSpec(\dpf(X))$.
\end{enumerate}
\end{thm}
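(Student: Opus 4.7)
The plan is to deduce Theorem \ref{mat} from Balmer's reconstruction (Theorem \ref{balm}) combined with Thomason's classification of thick tensor ideals of $\dpf(X)$ by specialization-closed subsets of $X$. Under this dictionary, $\cS_X(x)$ corresponds to $V_x := \{y \in X \mid x \notin \ol{\{y\}}\}$, and the prime thick ideals are exactly the $\cS_X(x)$ by Balmer. Thus the equivalence ``$\cP$ prime thick ideal iff $\cP = \cS_X(x)$'' in part (1) is immediate; what remains is the equivalence with being a prime thick subcategory (assuming $\cP$ is already a thick ideal).

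To show each $\cS_X(x)$ is a prime thick subcategory, I would first observe that the poset of specialization-closed subsets of $X$ strictly containing $V_x$ has smallest element $V_x \cup \ol{\{x\}}$, corresponding via Thomason to a smallest thick tensor ideal $\cS_X(x)'$ strictly containing $\cS_X(x)$. The key claim is that $\cS_X(x)'$ is already the smallest thick \emph{subcategory} strictly containing $\cS_X(x)$. I would pass to the Verdier quotient $\dpf(X)/\cS_X(x)$, which up to idempotent completion is equivalent to $\dpf(\cO_{X,x})$ via stalk-taking at $x$, reducing the claim to showing that $\dpf(\cO_{X,x})$ admits a smallest nonzero thick subcategory, identified with the image of $\cS_X(x)'$. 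This last step is local at $x$: from any nonzero perfect complex in $\dpf(\cO_{X,x})$, one must produce, via Koszul- or Nakayama-type constructions, a canonical nonzero perfect complex supported at the closed point lying in the thick subcategory it generates.

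For the converse direction of (1), assume $\cP$ is a thick ideal that is also a prime thick subcategory, with smallest proper superset $\mathcal{M}$. By minimality, $\mathcal{M} = \cP + \langle F \rangle$ for every $F \in \mathcal{M} \setminus \cP$. If $M \otimes N \in \cP$ with $M, N \notin \cP$, then $M \in \mathcal{M} = \cP + \langle N \rangle$; building $M$ from $\cP \cup \{N\}$ via triangles, shifts, and summands, and tensoring with $M$ while using $\cP \otimes M \subseteq \cP$ and $N \otimes M \in \cP$, yields $M \otimes M \in \cP$. Since thick tensor ideals of $\dpf(X)$ are radical (their supports satisfy $\mathrm{Supp}(M \otimes M) = \mathrm{Supp}(M)$), this forces $M \in \cP$, a contradiction; hence $\cP$ is prime as an ideal. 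Part (2) then follows at once: by (1) the map $\cS_X$ is a well-defined bijection onto $\ttSpec(\dpf(X))$, and both $\tSpec(\dpf(X))$ and $\ttSpec(\dpf(X))$ carry the subspace topology from $\Th(\dpf(X))$ (whose closed sets are the $\sZ(\cE)$), so $\cS_X$ agrees with Balmer's homeomorphism $X \cong \ttSpec(\dpf(X))$ and is an immersion. The main obstacle is the second paragraph: controlling non-ideal thick subcategories, since Thomason's classification only constrains tensor ideals.
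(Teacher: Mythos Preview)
The paper does not prove Theorem~\ref{mat} directly; it is quoted from \cite{Mat}. What the present paper does re-derive is the implication ``prime thick ideal $\Rightarrow$ prime thick subcategory'' (and hence part~(2)), via Lemma~\ref{aff} and Theorem~\ref{main2}(1). Your route to that implication is essentially the same: you localize and then invoke Thomason's classification in the monogenic situation to exhibit the smallest thick subcategory above $\cS_X(x)$. You localize at the point $x$ (passing to $\dpf(\cO_{X,x})$), whereas the paper localizes to a monogenic open and argues abstractly for any rigid monogenic $\cT$ with noetherian Balmer spectrum. Your stated ``main obstacle'' in the second paragraph is not one: once you are in $\dpf(\cO_{X,x})$ the category is monogenic, so \emph{every} thick subcategory is already a tensor ideal, and Hopkins--Neeman/Thomason applies on the nose. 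The smallest nonzero thick subcategory is the one corresponding to the specialization-closed set $\{\fm_x\}$; no separate Koszul argument is required.

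There is a genuine (though repairable) gap in your third paragraph. From $M,N\notin\cP$ you can only conclude $\mathcal{M}\subseteq\cP+\langle M\rangle$ and $\mathcal{M}\subseteq\cP+\langle N\rangle$; neither $M\in\mathcal{M}$ nor $\mathcal{M}=\cP+\langle N\rangle$ follows, and both are false in general (take $\cP=\cS_X(x)$ and $M=\cO_X$). The fix is to work with any $F\in\mathcal{M}\setminus\cP$ instead of $M$: from $F\in\cP+\langle N\rangle$ and $M\otimes N\in\cP$ one gets $F\otimes M\in\cP$; then from $F\in\cP+\langle M\rangle$ one gets $F\otimes F\in\cP$, and radicality of thick ideals in $\dpf(X)$ forces $F\in\cP$, a contradiction. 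An even cleaner argument, which is what the paper uses in Lemma~\ref{aff} for the monogenic case and which transfers verbatim once $\cP$ is assumed to be a thick ideal, is: $\cP$ is radical, hence $\cP=\bigcap_{\cQ\in\ttSpec(\dpf(X)),\,\cP\subseteq\cQ}\cQ$ by \cite[Lemma~4.2]{Bal05}; if $\cP$ is not prime then every such $\cQ$ strictly contains $\cP$ and therefore contains $\mathcal{M}$, yielding $\mathcal{M}\subseteq\cP$, which is absurd. This bypasses the tensor manipulation entirely. Your deduction of part~(2) from part~(1) and the common subspace topology on $\Th(\dpf(X))$ is correct.
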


\begin{rem}
We will see in Corollary \ref{imm} that the above immersion $\cS_X: X \hookrightarrow \tSpec(\dpf(X))$ follows from more general result Theorem \ref{main2}.	
\end{rem}

From the definition, it is quite difficult to determine the topological space $\tSpec(\dpf(X))$ for a given noetherian scheme $X$.
For special cases, triangular spectra are determined as follows.

\begin{prop}\label{top}
\begin{enumerate}[\rm(1)]
\item $($\cite[Corollary 2.17(1)]{Mat}$)$.
If $X$ is a quasi-affine noetherian scheme, then there is a homeomorphism
$$
\tSpec(\dpf(X)) \cong X.
$$

\item $($\cite[Example 4.10]{Mat}$)$.
Let $\PP^1$ be the projective line over a field.
Then there is a homeomorphism
$$
\tSpec(\dpf(\PP^1)) \cong \PP^1 \sqcup \ZZ,
$$
where $\ZZ$ is considered as the discrete topological space.

\item $($\cite[Theorem 4.11]{HO}$)$.
Let $E$ be an elliptic curve over a field.
Then there is a homeomorphism
$$
\tSpec(\dpf(E)) \cong E \sqcup \left(\bigsqcup_{(r,d) \in I} E_{r,d}\right).
$$ 
Here, $I:=\{(r,d) \in \ZZ^{2} \mid r >0, \gcd(r,d) = 1\}$ and $E_{r,d}$ is a copy of $E$ for each $(r,d) \in I$.
\end{enumerate}
\end{prop}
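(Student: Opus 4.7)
The plan is to leverage Theorem \ref{mat}, which already supplies an immersion $\cS_X: X \hookrightarrow \tSpec(\dpf(X))$ whose image is $\ttSpec(\dpf(X))$. Thus for each of the three statements, the question reduces to classifying the remaining prime thick subcategories, i.e., those of $\dpf(X)$ that are \emph{not} prime thick ideals. The decomposition then comes from splitting $\tSpec(\dpf(X))$ into the ``geometric'' part $\ttSpec(\dpf(X))$ and the non-tensor-ideal part.

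For (1), I would argue that when $X$ is quasi-affine noetherian, one has $\dpf(X) = \langle \cO_X \rangle$: the unit object tensor-generates the whole perfect derived category as a thick subcategory. Consequently, every thick subcategory is automatically a thick tensor ideal, because for $\cF \in \cX$ and arbitrary $\cG \in \dpf(X) = \langle \cO_X \rangle$, the object $\cG \ltensor_{\cO_X} \cF$ lies in the thick subcategory generated by $\cF$, hence in $\cX$. In particular, every prime thick subcategory is a prime thick ideal. Combining with Theorem \ref{mat}(2) yields the desired homeomorphism $\tSpec(\dpf(X)) = \ttSpec(\dpf(X)) \cong X$.

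For (2) and (3), the strategy is to classify all thick subcategories of $\dpf(X)$ and then identify the prime ones. For $\PP^1$, I would exploit the tilting equivalence $\dpf(\PP^1) \simeq \dpf(kQ)$ with $Q$ the Kronecker quiver and enumerate the thick subcategories of the latter. Beyond the Balmer spectrum, the distinguished subcategories $\langle \cO(n) \rangle$ for $n \in \ZZ$ are prime thick subcategories: each is properly contained only in $\dpf(\PP^1)$ itself, so the poset above it has a smallest element; but none of them is a tensor ideal since $\cO(n) \otimes \cO(-n) = \cO_{\PP^1}$ would force inclusion of all line bundles. These contribute the discrete $\ZZ$ component. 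For elliptic curves, I would invoke Atiyah's classification of indecomposable bundles, where the indecomposables of slope $d/r$ (with $\gcd(r,d)=1$, $r>0$) form a family parametrized by $E$ via tensoring with a line bundle of appropriate degree; each slope contributes a prime thick subcategory for every such indecomposable, giving the extra copy $E_{r,d}$.

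The main obstacle, for parts (2) and (3), is the complete classification of thick subcategories of $\dpf(\PP^1)$ and $\dpf(E)$ respectively, which requires nontrivial input from representation theory (Kronecker quiver representations) and from the geometry of bundles on elliptic curves (Atiyah's theorem and Fourier--Mukai transforms). Verifying that \emph{every} prime thick subcategory arises in the stated lists, not just that the listed subcategories are prime, is the harder direction. Since parts (1)--(3) are established in \cite{Mat} and \cite{HO}, for this paper the statement serves as input, and the proof reduces to citing those references.
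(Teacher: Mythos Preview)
Your proposal is correct and, crucially, your closing sentence identifies exactly what the paper does: Proposition~\ref{top} is stated in the preliminaries section purely as a record of known results, with citations to \cite{Mat} and \cite{HO}, and no proof is given in the paper itself. Your sketch for~(1) is essentially the argument carried out later in Lemma~\ref{aff} (where monogenicity forces $\tSpec = \ttSpec$), and your outlines for~(2) and~(3) correctly anticipate the ingredients used in the cited references and in Corollaries~\ref{p1} and~\ref{elp}.
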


\section{Reconstruction of schemes}

In this section, we discuss the reconstruction of a noetherian scheme $X$ from the triangulated category $\dpf(X)$. 
Reconstruction of underlying topological spaces has been discussed in \cite{HO, Mat} in terms of the triangular spectrum of $\dpf(X)$.
To reconstruct the structure sheaf, centers of triangulated categories play a crucial role.

We recall that a tensor triangulated category $(\cT, \otimes, \one)$ is {\it rigid} if it is {\it closed}, i.e., the exact functor $M \otimes -: \cT \to \cT$ has a right adjoint $[M,-]: \cT \to \cT$ for each $M \in \cT$ and such that every object $M$ is {\it strongly dualizable}, i.e., the canonical map
$$
[M, \one] \otimes N \to [M, N]
$$
is an isomorphism for each $N \in \cT$; see \cite{HPS} for details.
If $\cT$ is rigid, then so is $\cT(U)$ for any open subset $U \subseteq \ttSpec(\cT)$ by \cite[Proposition 2.15]{Bal07}. 

We say that the tensor triangulated category $(\cT, \otimes, \one)$ is {\it monogenic} if $\cT$ is classically generated by $\one$, i.e., $\cT= \langle \one \rangle$.
We say that $\cT$ is {\it locally monogenic} if, for any prime thick ideal $\cP \in \ttSpec(\cT)$, there is a quasi-compact open neighborhood $U \subseteq \ttSpec(\cT)$ of $\cP$ such that $\cT(U)$ is monogenic.

We begin with stating the following general result for specific tensor triangulated categories whose proof is taken from \cite[Lemma 4.10]{Rou}.

\begin{prop}\label{prop}
Let $(\cT, \otimes, \one)$ be an idempotent complete, rigid, locally monogenic tensor triangulated category. 
Then the evaluation at $\one$ induces an isomorphism
$$
\rZ(\cT)_{\lred} \cong \End_{\cT}(\one)_\red.
$$	
\end{prop}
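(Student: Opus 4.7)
The plan is to produce an inverse to the evaluation map $\alpha: \rZ(\cT) \to \End_\cT(\one)$, $\eta \mapsto \eta_\one$, after reducing modulo the appropriate nilpotent ideals. The natural candidate is
$$
\beta: \End_\cT(\one) \to \rZ(\cT), \quad f \mapsto \bigl(M \mapsto \lambda_M^{-1} \circ (f \otimes \id_M) \circ \lambda_M\bigr),
$$
where $\lambda_M: M \xrightarrow{\cong} \one \otimes M$ is the unit coherence isomorphism. Bifunctoriality of $\otimes$ yields naturality of $\beta(f)$, and compatibility of the unitor with the shift gives $\beta(f)[1] = [1]\beta(f)$, so $\beta(f) \in \rZ(\cT)$; $\beta$ is visibly a ring map, and the unit axioms give $\alpha \circ \beta = \id$. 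Tautologically $\alpha$ sends locally nilpotent elements to nilpotent ones, while $\beta$ sends nilpotent $f$ to a locally nilpotent natural transformation since $\beta(f)_M^n = \lambda_M^{-1}(f^n \otimes \id_M)\lambda_M$. Hence both maps descend to ring homomorphisms between $\rZ(\cT)_\lred$ and $\End_\cT(\one)_\red$ with $\bar\alpha \bar\beta = \id$, and the proposition reduces to the implication: \emph{if $\eta_\one = 0$, then $\eta \in \rZ(\cT)_\lnil$}.

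I would first verify that
$$
\cS(\eta) := \{M \in \cT \mid \eta_M^n = 0 \text{ for some } n \geq 1\}
$$
is a thick subcategory. Closure under shifts and direct summands is immediate; for an exact triangle $A \xrightarrow{u} B \xrightarrow{v} C \xrightarrow{w} A[1]$ with $\eta_A^n = 0$ and $\eta_C^m = 0$, naturality of $\eta$ forces $v \circ \eta_B^m = \eta_C^m \circ v = 0$, so $\eta_B^m = u \circ s$ for some $s$, whence $\eta_B^{n+m} = \eta_B^n \circ u \circ s = u \circ \eta_A^n \circ s = 0$. Since $\eta_\one = 0$ places $\one \in \cS(\eta)$, thickness gives $\langle \one \rangle \subseteq \cS(\eta)$, which already settles the monogenic case $\cT = \langle \one \rangle$.

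For the locally monogenic setting I would invoke quasi-compactness of the spectral space $\ttSpec(\cT)$ to extract a finite open cover $U_1, \ldots, U_k$ with each $\cT(U_i)$ monogenic. Lemmas \ref{cntidm} and \ref{cntquot}, applied to the composite $\cT \to \cT/\cT^{U_i} \to (\cT/\cT^{U_i})^\nat = \cT(U_i)$, yield ring homomorphisms $\rho_i : \rZ(\cT) \to \rZ(\cT(U_i))$ satisfying $\rho_i(\eta)_{\one_{U_i}} = 0$; by the monogenic case applied inside each $\cT(U_i)$, each $\rho_i(\eta)$ is locally nilpotent. Hence for any fixed $M \in \cT$ and each $i$, some power $\eta_M^{n_i}$ becomes zero in $\cT(U_i)$, equivalently factors through an object of $\cT^{U_i}$.

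The remaining step, which I expect to be the main technical obstacle, is upgrading these local vanishings into a single $n$ with $\eta_M^n = 0$ in $\cT$ itself. This is where rigidity is essential: by the standard local-to-global principle for rigid tensor triangulated categories with quasi-compact spectrum, a morphism that becomes zero in each $\cT(U_i)$ of a finite open cover is necessarily $\otimes$-nilpotent. Applying this to $\eta_M^N$ with $N := \max_i n_i$ produces $(\eta_M^N)^{\otimes L} = 0$ for some $L$, and one then uses strong dualizability of $M$ --- tracing around the coevaluation and evaluation maps --- to convert $\otimes$-nilpotency of the endomorphism $\eta_M^N$ into genuine composition-nilpotency. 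Since $M$ was arbitrary, this proves $\eta \in \rZ(\cT)_\lnil$ and completes the argument.
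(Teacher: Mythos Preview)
Your setup and the monogenic case coincide with the paper's: both reduce to showing $\eta_\one=0\Rightarrow\eta\in\rZ(\cT)_\lnil$, and both handle $\cT=\langle\one\rangle$ via the thick subcategory $\cS(\eta)$. The divergence is in the passage from local to global.

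The paper argues by induction on the size of a finite monogenic cover $U_1,\ldots,U_n$. Writing $V=U_2\cup\cdots\cup U_n$, it first uses the monogenic case on $\cT(U_1)$ to see that some $\eta_M^l$ factors through an object $N\in\cT^{U_1}$, then applies the induction hypothesis on $\cT(V)$ to get $Q'(\eta_N^{dl})=0$ in $\cT/\cT^{V}$. The crucial input is \cite[Theorem 7.1]{BF}: the quotient $Q':\cT\to\cT/\cT^{V}$ is fully faithful on $\cT^{U_1}$, so $\eta_N^{dl}=0$ already in $\cT$, whence $\eta_M^{(d+1)l}=0$.

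Your route avoids both the induction and \cite[Theorem 7.1]{BF}. It is correct, but the two black boxes you invoke deserve to be unpacked, since neither is really a named ``standard principle''. For the first: if $\eta_M^{N}$ vanishes in each $\cT(U_i)$ then it factors through some $N_i\in\cT^{U_i}$, so $(\eta_M^{N})^{\otimes k}$ factors through $N_1\otimes\cdots\otimes N_k\in\bigcap_i\cT^{U_i}=\cT^{\ttSpec(\cT)}=0$, the last equality using rigidity (an object in every prime is $\otimes$-nilpotent, hence zero since it is a summand of its triple with its dual). For the second: if $g:M\to M$ has $g^{\otimes L}=0$, then under the algebra isomorphism $\End_\cT(M)\cong\Hom_\cT(\one,M\otimes M^\vee)$ one has $\widehat{g^{L}}=\mu^{(L-1)}\circ\hat g^{\otimes L}$ and $\hat g^{\otimes L}$ differs from $\widehat{g^{\otimes L}}$ by a symmetry isomorphism, so $g^{\otimes L}=0\Rightarrow g^{L}=0$. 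With these two lines filled in, your argument is complete and arguably more direct than the paper's; the paper's approach, on the other hand, plugs into an existing Mayer--Vietoris machinery and makes the role of the cover more structural.
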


\begin{proof}
As $\ttSpec(\cT)$ is a spectral space, there are quasi-compact open covering $U_1, U_2, \ldots, U_n$ of $\ttSpec(\cT)$ such that $\cT(U_i) = \langle \one_{U_i} \rangle$ for $i=1,2,\ldots, n$. 
	
Let $\alpha: \rZ(\cT) \to \End_{\cT}(\one), \eta \mapsto \eta_{\one}$ be the evaluation at $\one$.	
Since $\alpha$ has a right inverse $\End_{\cT}(\one) \to \rZ(\cT), \phi \mapsto \phi \otimes(-)$, the homomorphism $\alpha$ is surjective.
Therefore, it suffices to show that the induced homomorphism $\overline{\alpha}: \rZ(\cT)_{\lred} \to \End_{\cT}(\one)_\red$ is injective. 
To this end, let us prove $\eta \in \rZ(\cT)_\lnil$ for each $\eta \in \rZ(\cT)$ with $\alpha(\eta) = 0$, i.e., $\eta_\one = 0$.
We proceed by induction on $n$.

First assume $n=1$, i.e., $\cT = \langle \one \rangle$. 
We prove that $\eta_M$ is nilpotent for any $M \in \cT$.
We consider the subcategory $\cX \subseteq \cT$ consisting of objects $M \in \cT$ with $\eta_M$ nilpotent.
Then $\cX$ contains $\one$ as $\eta_\one = 0$.
In addition, $\cX$ is a thick subcategory.
Indeed, it is clear that $\cX$ is closed under direct summands and shifts. 
Take an exact triangle $L \xrightarrow{f} M \xrightarrow{g} N \to L[1]$ in $\cT$ with $L, N \in \cX$.
Then there is an integer $l \ge 1$ such that $\eta_L^{l} = 0$ and $\eta_N^{l}=0$.
From the naturality of $\eta^l$, the equality $g\eta_M^l = \eta_N^l g = 0$ holds.
Thus, $\eta_M^l$ factors as $\eta_M^l: M \xrightarrow{a} L \xrightarrow{f} M$.
Again using the naturality of $\eta^l$, we get $\eta_M^{2l} = \eta_M^{l} \eta_M^{l} = \eta_M^l fa = f \eta_L^l a = 0$.
As a result, $M \in \cX$ follows.
Therefore, $\cX$ is thick and so $\cX = \langle \one \rangle = \cT$.
This means that $\eta$ is locally nilpotent.
\if0
Since $M \in \cT = \langle \one \rangle$, there are integers $r \ge 0, n_0,n_1,\ldots, n_r$, an object $M' \in \cT$, and exact triangles
$$
X_{i+1} \to X_{i} \to \one[n_{i}] \to X_{i-1}[1] \quad (i= 0,1,\ldots, r-1)
$$
with $X_r = \one[n_r], X_0 = M \oplus M'$.
When $r=0$, we consider $M \oplus M' \cong \one[n_0]$.
We proof that $\eta_M^{r+1} = 0$.
Indeed, we proceed by induction on $r$.
If $r=0$, then $\eta_{M} \oplus \eta_{M'} \cong \eta_{M \oplus M'} \cong \eta_{\one[n_0]} = \eta_{\one}[n_0] = 0$. 
Thus $\eta_M=0$.
If $r>0$, then the induction hypothesis yields $\eta_{X_1}^r = 0$.
From the exact triangle $X_{1} \to M \oplus M' \to \one[n_{1}] \to X_{1}[1]$, $\eta_{X_1}^r = 0$, and $\eta_{\one[n_1]} = \eta_{\one}[n_1] = 0$, we get $\eta_{M\oplus M'}^{r+1}=0$. 
Hence $\eta_{M}^{r+1} = 0$.
\fi

Next, assume that $n >1$ and set $V= U_2 \cup U_3 \cup \cdots U_n$.
Assume further that the evaluations at the unit objects induce isomorphisms
$$
\ol{\alpha}: \rZ(\cT(U_1))_\lred \xrightarrow{\cong} \End_{\cT(U_1)}(\one_{U_1})_\red,\quad \ol{\alpha}: \rZ(\cT(V))_\lred \xrightarrow{\cong}  \End_{\cT(V)}(\one_{V})_\red.
$$
Here, we note that there is a homeomorphism $f: \ttSpec(\cT(V)) \xrightarrow{\cong} V$ such that $\cT(V)(f^{-1}(U_i)) \cong \cT(U_i) = \langle \one_{U_i}\rangle$ for $i=2,3,\ldots, n$; see \cite[Proposition 1.11]{BF} and \cite[Constructions 24 and 29]{BalICM}. 
Hence we can apply the induction hypothesis to $\cT(V)$.

\if0
\begin{claim*}
Let $U \subseteq \ttSpec(\cT)$ be an quasi-compact open subset, $\eta \in $.
If the induced homomorphism $\overline{\alpha}: \rZ(\cT(U))_\lred \xrightarrow{\cong} \End_{\cT(U)}(\one_{U})_\red$, then the canonical functors $Q: \cT \to \cT/\cT_Z$, $\iota: \cT/\cT_Z \to \cT(U)$ 
\end{claim*}

\begin{proof}
For $\eta \in \rZ(\cT)$, $\alpha(Q_*(\eta)) = Q_*(\eta)_{\one_U} = Q(\eta_{\one}) = Q(\alpha(\eta))$. This shows that the left square is commutative.
For the right square, $\eta \in \rZ(\cT(U))$, $\iota(\alpha(\iota^*(\eta))) = \iota(\iota^*(\eta)_{Q(\one)}) = \iota(\iota^{-1}(\eta_{\one_U})) = \eta_{\one_U} = \alpha(\eta)$. 
\end{proof}
\fi
One can easily verify that the canonical functors $Q: \cT \to \cT/\cT^{U_1}$ and $\iota: \cT/\cT^{U_1} \to \cT(U_1)$ induce a commutative diagram
$$
\xymatrix{
\rZ(\cT)_\lred \ar[r]^-{\ol{Q_*}} \ar[d]_{\overline{\alpha}} & \rZ(\cT/\cT^{U_1})_\lred \ar[d]_{\overline{\alpha}} & \rZ(\cT(U_1))_\lred \ar[d]_{\overline{\alpha}}^{\cong} \ar[l]_-{\ol{\iota^*}}^\cong\\ 
\End_{\cT}(\one)_\red \ar[r]^-{\ol{Q}} & \End_{\cT/\cT^{U_1}}(Q(\one))_\red \ar[r]^-{\ol{\iota}}_\cong & \End_{\cT(U_1)}(\one_{U_1})_\red,
}
$$
where the evaluations at the unit objects induce the vertical arrows.
From this diagram, $\eta_\one = 0$ yields $Q_*(\eta) \in \rZ(\cT/\cT^{U_1})_\lnil$. 
For an object $M\in \cT$, there is an integer $l \ge 1$ such that $Q(\eta^l_M) = Q_*(\eta)^l_{Q(M)} =0$.
Accordingly, $\eta^l_M$ factors some $N \in \cT^{U_1}$.
Then $\eta_M^{(d+1)l}$ factors $\eta_N^{dl}$ for any integer $d \ge 1$.

For the canonical functors $Q': \cT \to \cT/\cT^V$ and $\iota': \cT/\cT^V \to \cT(V)$, the same argument as above shows that $Q'_*(\eta) \in \rZ(\cT/\cT^V)_\lnil$.
Here we use the isomorphism $\ol{\alpha}: \rZ(\cT(V))_\lred \xrightarrow{\cong} \End_{\cT(V)}(\one_{V})_\red$ which is our induction hypothesis.
In particular, $Q'(\eta _N^{dl}) = Q'_*(\eta)^{dl}_{Q'(N)}= 0$ for some $d\ge 1$.
It follows from \cite[Theorem 7.1]{BF} that the functor $Q'$ restricts to a fully faithful functor 
$$
Q': \cT^{U_1} \to (\cT/\cT^V)^{U_1}
$$
and hence $Q'(\eta _N^{dl}) = 0$ implies $\eta_N^{dl} = 0$.
Then $\eta^{(d+1)l}_M = 0$ holds as $\eta^{(d+1)l}_M$ factors $\eta_N^{dl}$.
As a result, we conclude that $\eta \in \rZ(\cT)_\lnil$.
\end{proof}

Let $X$ be a noetherian scheme and $U \subseteq X$ be an open subset with $Z:= X \setminus U$.
Then \cite[Theorem 2.13]{Bal02} shows that the restriction functor $(-)\lvert_U: \dpf(X) \to \dpf(U)$ induces a triangle equivalence
$$
\left(\dpf(X)/\mathrm{D^{pf}_{\mathit{Z}}}(X) \right)^\natural \cong \dpf(U)
$$
of tensor triangulated categories, where $\mathrm{D^{pf}_{\mathit{Z}}}(X) = \{F \in \dpf(X) \mid F\lvert_U \cong 0\} = \bigcap_{x \in U} \cS_X(x)$.
Therefore, the left-hand side is $\dpf(X)(\cS_X(U))$.
Applying Proposition \ref{prop} to $\cT= \dpf(X)$, the following result is recovered.

\begin{cor}$($\cite[Proposition 8.1]{Bal02} and \cite[Lemma 4.10]{Rou}$)$. \label{cntsect}
For a noetherian scheme $X$, there is an isomorphism
$$
\rZ(\dpf(X))_{\lred} \cong \Gamma(X, \cO_X)_\red.
$$	
\end{cor}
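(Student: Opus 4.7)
The plan is to derive this as an immediate application of Proposition \ref{prop} to $\cT = (\dpf(X), \ltensor_{\cO_X}, \cO_X)$, together with the standard identification $\End_{\dpf(X)}(\cO_X) = \Gamma(X, \cO_X)$. I therefore need to check that $\dpf(X)$ is idempotent complete, rigid, and locally monogenic.

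Idempotent completeness of $\dpf(X)$ is classical, since idempotents split in the perfect derived category of a scheme. Rigidity holds because every perfect complex has a strong dual given by its derived $\cHom$ into $\cO_X$, and the internal hom is provided by $\mathbf{R}\cHom_{\cO_X}(-,-)$. For local monogenicity, fix a prime thick ideal $\cP \in \ttSpec(\dpf(X))$; by Theorem \ref{balm}, $\cP = \cS_X(x)$ for some $x \in X$. Choose an affine open neighborhood $U = \Spec A$ of $x$. Then $\cS_X(U) \subseteq \ttSpec(\dpf(X))$ is a quasi-compact open subset containing $\cP$, and the monoidal triangle equivalence $\dpf(X)(\cS_X(U)) \cong \dpf(U)$ recalled in the paragraph just before the corollary identifies this localization with $\dpf(A)$. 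Since $A = \cO_U$ is a classical generator of $\dpf(A)$ as a thick subcategory, this category is monogenic, as required.

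With the three hypotheses in hand, Proposition \ref{prop} yields the isomorphism
$$
\rZ(\dpf(X))_\lred \cong \End_{\dpf(X)}(\cO_X)_\red.
$$
Since $\cO_X$ is concentrated in cohomological degree zero, $\End_{\dpf(X)}(\cO_X) = \Hom_{\cO_X}(\cO_X, \cO_X) = \Gamma(X, \cO_X)$, and passing to the reduced quotient gives the desired isomorphism. In this argument, the real content lies in Proposition \ref{prop} itself; at the level of the corollary, the only substantive step is the verification of local monogenicity, and even that is immediate once one combines Balmer's reconstruction theorem with the Thomason-style localization equivalence for open subsets.
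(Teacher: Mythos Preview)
Your proposal is correct and follows essentially the same route as the paper: verify that $\dpf(X)$ is idempotent complete, rigid, and locally monogenic (the last via Theorem~\ref{balm} and the localization equivalence $\dpf(X)(\cS_X(U))\cong\dpf(U)$ for affine $U$), then apply Proposition~\ref{prop} and the identification $\End_{\dpf(X)}(\cO_X)\cong\Gamma(X,\cO_X)$. The only cosmetic difference is that the paper cites \cite[Proposition~4.1]{Bal07} for rigidity and justifies idempotent completeness via compact objects in $\dub_{qc}(\Mod\cO_X)$, whereas you sketch these directly.
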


\begin{proof}
$\dpf(X)$ is idempotent complete because it is realized as the full subcategory of compact objects of $\dub_{qc}(\Mod \cO_X)$.
Also, it is rigid by \cite[Proposition 4.1]{Bal07}.	
For any affine scheme $U$, it holds that $\dpf(U) = \langle \cO_U \rangle$.   
It follows from Theorem \ref{balm} and \cite[Theorem 2.13]{Bal02} that the tensor triangulated category $\dpf(X)$ is locally monogenic. Thus, we get isomorphisms
$$
\rZ(\dpf(X))_{\lred} \cong \End_{\dpf(X)}(\cO_X)_\red \cong\Gamma(X, \cO_X)_\red
$$	
of commutative rings from Proposition \ref{prop}.
\end{proof}

Now we state and prove the first main result in this paper about the reconstruction of a noetherian scheme from the perfect derived category.

\begin{thm}\label{main}
Let $X_1$ and $X_2$ be noetherian schemes and $\Phi: \dpf(X_1) \xrightarrow{\cong} \dpf(X_2)$ be a triangle equivalence.
Assume that there is a line bundle $L_i$ on $X_i$ for $i=1,2$ satisfying the following conditions:
\begin{enumerate}[\rm(i)]
\item
$L_i$ is ample or anti-ample for $i=1,2$.
\item
There is an isomorphism
$$
\Phi(F \ltensor_{\cO_{X_1}} L_1) \cong \Phi(F) \ltensor_{\cO_{X_2}} L_2
$$	
for any $F \in \dpf(X_1)$.
\end{enumerate}
Then $(X_1)_{\red}$ and $(X_2)_{\red}$ are isomorphic as schemes.
\end{thm}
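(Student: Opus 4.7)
The plan is to use the triangular spectrum to recover $|(X_i)_\red|$ as a topological space, and then the center $\rZ(-)_\lred$ of triangulated categories together with Corollary~\ref{cntsect} to reconstruct the structure sheaves. Condition (ii) combined with the invertibility of $L_i$ yields that $\Phi$ conjugates $(-) \otimes L_1^n$ to $(-) \otimes L_2^n$ for every $n \in \ZZ$. Since $L_i$ is ample or anti-ample, the set $\{L_i^{\otimes n} \mid n \in \ZZ\}$ generates $\dpf(X_i)$ as a thick subcategory, and consequently a thick subcategory $\cP \subseteq \dpf(X_i)$ is a thick ideal if and only if $\cP \otimes L_i = \cP$. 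Combining these observations, $\Phi$ restricts to a bijection on thick ideals and therefore on prime thick ideals, which by Theorem~\ref{mat}(1) are exactly the prime thick subcategories that are ideals. Via Lemma~\ref{idm} and Theorem~\ref{balm}, this gives a homeomorphism $|(X_1)_\red| \cong |X_1| \cong |X_2| \cong |(X_2)_\red|$.

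For each quasi-compact open $U_1 \subseteq X_1$ with corresponding open $U_2 \subseteq X_2$ under this homeomorphism, set $Z_i := X_i \setminus U_i$. The thick ideal $\mathrm{D^{pf}_{\mathit{Z_i}}}(X_i) = \bigcap_{x \in U_i} \cS_{X_i}(x)$ is preserved by $\Phi$, since its prime thick ideals are the ones corresponding under the topological bijection. Hence $\Phi$ descends to a triangle equivalence between the Verdier quotients $\dpf(X_i)/\mathrm{D^{pf}_{\mathit{Z_i}}}(X_i)$, and passing to idempotent completions via \cite[Theorem 2.13]{Bal02} yields a triangle equivalence $\dpf(U_1) \cong \dpf(U_2)$. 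Applying Lemma~\ref{cntidm} to the idempotent completion step together with Corollary~\ref{cntsect} produces a ring isomorphism $\Gamma(U_1, \cO_{U_1})_\red \cong \Gamma(U_2, \cO_{U_2})_\red$.

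To finish, these local isomorphisms must glue into a morphism of structure sheaves on the reduced schemes. For $V_i \subseteq U_i$ with $V_2$ corresponding to $V_1$, compatibility with the restriction maps $\Gamma(U_i, \cO_{U_i}) \to \Gamma(V_i, \cO_{V_i})$ follows from the naturality statements in Lemmas~\ref{cntidm} and \ref{cntquot}, combined with the fact that the functor $\dpf(U_i) \to \dpf(V_i)$ is itself obtained from a further Verdier quotient followed by idempotent completion, and that all the local equivalences arise from the single global functor $\Phi$. I expect the main obstacle to be precisely this bookkeeping: tracking how the equivalences induced by $\Phi$ on different open subsets interact consistently with the center and its locally reduced quotient, so that the resulting family of local ring isomorphisms is natural in the open subset. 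Once this compatibility is verified, gluing produces the required isomorphism of structure sheaves covering the topological homeomorphism, yielding $(X_1)_\red \cong (X_2)_\red$ as schemes.
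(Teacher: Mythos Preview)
Your proposal is correct and follows essentially the same route as the paper's own proof. The paper packages the topological step by introducing the subspace $\tSpec^{L_i}(\dpf(X_i))$ of prime thick subcategories fixed by $(-)\otimes L_i$ and then invoking Theorem~\ref{mat}(1) to identify it with $\ttSpec(\dpf(X_i))$, whereas you argue directly that the $L_i$-fixed thick subcategories are precisely the thick ideals; but this is only a cosmetic difference. The structure-sheaf step is identical: pass to Verdier quotients supported on complements, take idempotent completions, apply Corollary~\ref{cntsect}, and observe compatibility with restriction (which the paper also asserts without further detail).
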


\begin{proof}
Let us first remark that as $L_i$ or $L_i^{\otimes -1}$ is ample, $\{L_i^{\otimes m} \mid m \in \ZZ\}$ generates $\dpf(X_i)$ by \cite[Lemma 2.2]{Nee1} and \cite[Example 1.10]{Nee2}.

For $i=1,2$, denote by $\tSpec^{L_i}(\dpf(X_i))$ the subset of $\tSpec(\dpf(X_i))$ consisting of prime thick subcategories $\cP$ with $\cP \ltensor_{\cO_{X_i}} L_i := \{F \ltensor_{\cO_{X_i}} L_i \mid F \in \cP\} = \cP$. 
Then the homeomorphism $\Phi^*: \tSpec(\dpf(X_2)) \xrightarrow{\cong} \tSpec(\dpf(X_1))$ restricts to the homeomorphism $\Phi^*: \tSpec^{L_2}(\dpf(X_2)) \xrightarrow{\cong} \tSpec^{L_1}(\dpf(X_1))$ by (ii).
On the other hand, by the first remark, $\{L_i^{\otimes n} \mid n \in \ZZ\}$ generates $\dpf(X_i)$ and hence Theorem \ref{mat}(1) implies $\tSpec^{L_i}(\dpf(X_1)) = \ttSpec(\dpf(X_i))$.
From Theorem \ref{balm}, we obtain a homeomorphism
$
\cS_{X_i}: X_i \xrightarrow{\cong} \ttSpec(\dpf(X_1)) = \tSpec^{L_i}(\dpf(X_i)).
$
Thus, we get a homeomorphism
$$
f: X_2 \xrightarrow{\cS_{X_2}} \tSpec^{L_2}(\dpf(X_2)) \xrightarrow{\Phi^*} \tSpec^{L_1}(\dpf(X_1)) \xrightarrow{\cS_{X_1}^{-1}} X_1.
$$
The construction of this map yields $\Phi(\cS_{X_1}(f(x_2))) = \cS_{X_2}(x_2)$ for any $x_2 \in X_2$.

For an open subset $U \subseteq X_2$ with $Z = X_2 \setminus U$, one has
$$
\Phi(\mathrm{D^{pf}_{\mathit{f(Z)}}}(X_1)) = \Phi\left(\bigcap_{x_1 \in f(Z)}\cS_{X_1}(x_1)\right) = \bigcap_{x_1 \in f(Z)} \Phi\left(\cS_{X_1}(x_1)\right) = \bigcap_{x_2 \in Z} \cS_{X_2}(x_2) = \mathrm{D^{pf}_{\mathit{Z}}}(X_2).
$$
Therefore, the triangle equivalence $\Phi: \dpf(X_1) \xrightarrow{\cong} \dpf(X_2)$ induces a triangle equivalence
$$
\ol{\Phi}: \dpf(X_1)/\mathrm{D^{pf}_{\mathit{f(Z)}}}(X_1) \xrightarrow{\cong} \dpf(X_2)/\mathrm{D^{pf}_{\mathit{Z}}}(X_2).
$$
Taking idempotent completion and $\rZ(-)_{\lred}$, we obtain an isomorphism 
$$
\Gamma(f(U), \cO_{X_1})_\red \cong \Gamma(U, \cO_{X_2})_\red
$$
by \cite[Theorem 2.13]{Bal02} and Corollary \ref{cntsect}. 
As we can easily see that this isomorphism is compatible with the restriction of open subsets, we get an isomorphism $(X_2)_\red \cong (X_1)_\red$ of schemes.
\end{proof}

This result has several applications.
The first one is the following reconstruction theorem which is well-known at least for the affine case.

\begin{cor}\label{qaff}
Let $X_1$ and $X_2$ be noetherian quasi-affine schemes.
If there is a triangle equivalence $\Phi: \dpf(X_1) \xrightarrow{\cong} \dpf(X_2)$, then $(X_1)_{\red}$ and $ (X_2)_{\red}$ are isomorphic as schemes.
\end{cor}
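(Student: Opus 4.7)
The plan is to reduce the corollary to (the proof of) Theorem \ref{main} by taking the trivial line bundles $L_1 = \cO_{X_1}$ and $L_2 = \cO_{X_2}$. With this choice condition (ii) holds tautologically, since $-\ltensor_{\cO_{X_i}} \cO_{X_i}$ is naturally isomorphic to the identity functor on $\dpf(X_i)$, so any triangle equivalence $\Phi$ automatically intertwines these ``twists''.

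Condition (i) is not literally available, but inspecting the proof of Theorem \ref{main} one sees that ampleness is used only to establish the set-theoretic identity $\tSpec^{L_i}(\dpf(X_i)) = \ttSpec(\dpf(X_i))$. With $L_i = \cO_{X_i}$ the $L_i$-stability condition $\cP \otimes L_i = \cP$ is automatic, so $\tSpec^{L_i}(\dpf(X_i))$ is the full spectrum $\tSpec(\dpf(X_i))$. On the other hand, in the quasi-affine case Proposition \ref{top}(1) gives a homeomorphism $\tSpec(\dpf(X_i)) \cong X_i$, while Theorem \ref{mat}(2) realizes $X_i$ as the subset $\ttSpec(\dpf(X_i)) \subseteq \tSpec(\dpf(X_i))$; comparing cardinalities forces $\tSpec(\dpf(X_i)) = \ttSpec(\dpf(X_i))$. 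Hence the required identification holds without any ampleness hypothesis.

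With these two ingredients arranged, the remaining argument in Theorem \ref{main} applies verbatim: one constructs a homeomorphism $f := \cS_{X_1}^{-1}\circ \Phi^*\circ \cS_{X_2} : X_2 \to X_1$ with $\Phi(\cS_{X_1}(f(x_2))) = \cS_{X_2}(x_2)$, checks that $\Phi$ sends $\mathrm{D^{pf}_{\mathit{f(Z)}}}(X_1)$ to $\mathrm{D^{pf}_{\mathit{Z}}}(X_2)$ for each open $U \subseteq X_2$ with complement $Z$, passes to idempotent completions of the induced Verdier quotient equivalence, which by \cite[Theorem 2.13]{Bal02} recovers a triangle equivalence $\dpf(f(U)) \cong \dpf(U)$, and applies Corollary \ref{cntsect} to obtain isomorphisms $\Gamma(f(U),\cO_{X_1})_\red \cong \Gamma(U,\cO_{X_2})_\red$ compatible with restriction. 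These glue to the desired isomorphism $(X_1)_\red \cong (X_2)_\red$ of schemes. The only real point beyond quoting Theorem \ref{main} is thus the equality $\tSpec(\dpf(X_i)) = \ttSpec(\dpf(X_i))$ for quasi-affine $X_i$, and this is an immediate consequence of the prior results.
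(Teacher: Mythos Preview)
Your detour is unnecessary: on a quasi-affine noetherian scheme the structure sheaf $\cO_{X_i}$ is itself an ample line bundle (this is the standard characterization of quasi-affineness), so condition (i) of Theorem \ref{main} holds literally with $L_i=\cO_{X_i}$. The paper's proof is accordingly a one-liner invoking Theorem \ref{main} directly, without opening up its proof.

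Your workaround is also not quite sound as written. You argue that Proposition \ref{top}(1) gives a bijection $X_i \to \tSpec(\dpf(X_i))$ while Theorem \ref{mat}(2) gives an injection $X_i \hookrightarrow \tSpec(\dpf(X_i))$ with image $\ttSpec(\dpf(X_i))$, and then say ``comparing cardinalities forces'' equality. But for infinite sets a proper inclusion $A\subsetneq B$ with $|A|=|B|$ is perfectly possible, so cardinality alone does not yield $\ttSpec=\tSpec$. What actually makes this work is that the homeomorphism in Proposition \ref{top}(1) \emph{is} the map $\cS_{X_i}$ of Theorem \ref{mat}(2); once you say that, surjectivity of $\cS_{X_i}$ gives the equality directly. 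With that fix your argument goes through, but it is still longer than simply observing $\cO_{X_i}$ is ample.
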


\begin{proof}
Take $L_i$ to be $\cO_{X_i}$.
Then $\cO_{X_i}$ is an ample line bundle as $X_i$ is quasi-affine.
Moreover, the isomorphism 
$$
\Phi(F \ltensor_{\cO_{X_1}} \cO_{X_1}) \cong \Phi(F) \ltensor_{\cO_{X_2}} \cO_{X_2}
$$
obviously holds for each $F \in \dpf(X_1)$; hence, the result follows by Theorem \ref{main}.
\end{proof}
 
Another application of Theorem \ref{main} is to recover the famous result by Bondal-Orlov \cite{BO} and Ballard \cite{Ball}. 

\begin{cor}$($\cite[Theorem 2.5]{BO}, \cite[Theorem 1]{Ball}$)$. \label{BOB}
Let $X_i$ be a Gorenstein projective scheme over a field $k$ with ample or anti-ample canonical bundle $\omega_{X_i}$ for $i=1,2$.
If there is a $k$-linear triangle equivalence $\Phi: \dpf(X_1) \xrightarrow{\cong} \dpf(X_2)$, then $(X_1)_{\red}$ and $(X_2)_{\red}$ are isomorphic as schemes.
\end{cor}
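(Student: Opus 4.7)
The plan is to apply Theorem \ref{main} with $L_i := \omega_{X_i}$. Condition (i) is given, so the only task is to produce the functorial isomorphism of condition (ii), up to a harmless shift that I will absorb into the argument of Theorem \ref{main} itself.

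Since $X_i$ is Gorenstein, projective, and of dimension $n_i$ over $k$, its dualizing complex is $\omega_{X_i}[n_i]$, which is perfect. Serre duality then shows that the exact functor
$$
S_i := (-) \ltensor_{\cO_{X_i}} \omega_{X_i}[n_i] : \dpf(X_i) \to \dpf(X_i)
$$
is a Serre functor on the $\Hom$-finite $k$-linear triangulated category $\dpf(X_i)$. By the Bondal-Kapranov theorem, any $k$-linear triangle equivalence commutes with Serre functors, which yields a natural isomorphism
$$
\Phi(F \ltensor_{\cO_{X_1}} \omega_{X_1}[n_1]) \cong \Phi(F) \ltensor_{\cO_{X_2}} \omega_{X_2}[n_2]
$$
for every $F \in \dpf(X_1)$. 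This matches condition (ii) of Theorem \ref{main} except for the shift discrepancy $n_2 - n_1$.

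This shift is not a real obstruction. In the proof of Theorem \ref{main}, condition (ii) is used only to deduce that $\Phi^{\ast}$ restricts to a bijection $\tSpec^{L_2}(\dpf(X_2)) \cong \tSpec^{L_1}(\dpf(X_1))$, and by definition a thick subcategory $\cP$ satisfies $\cP \ltensor L_i = \cP$ if and only if $\cP \ltensor L_i[k] = \cP$ for any $k \in \ZZ$, because thick subcategories are closed under shifts. Likewise, ampleness or anti-ampleness of $\omega_{X_i}$ ensures that $\{\omega_{X_i}^{\otimes m} \mid m \in \ZZ\}$ (and any shifted variant of it) generates $\dpf(X_i)$, so Theorem \ref{mat}(1) still forces $\tSpec^{\omega_{X_i}}(\dpf(X_i)) = \ttSpec(\dpf(X_i))$. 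The remainder of the argument in Theorem \ref{main} then goes through to produce the isomorphism $(X_1)_{\red} \cong (X_2)_{\red}$.

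The main conceptual input is the existence of the Serre functor $S_i$ on $\dpf(X_i)$ in the Gorenstein projective setting, together with its naturality under any $k$-linear triangle equivalence. The one point requiring care is the dimension shift $n_2 - n_1$: I expect this to be the main subtlety, but as explained it is absorbed by the shift-invariance of the subset $\tSpec^{L_i}(\dpf(X_i))$ inside $\tSpec(\dpf(X_i))$.
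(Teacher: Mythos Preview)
Your proof is correct and follows essentially the same route as the paper: take $L_i=\omega_{X_i}$, invoke the Serre functor $S_{X_i}\cong(-)\ltensor_{\cO_{X_i}}\omega_{X_i}[\pm\dim X_i]$, use that any $k$-linear equivalence commutes with Serre functors, and then apply Theorem~\ref{main}. The paper cites \cite[Lemma~6.6]{Ball} for the form of $S_{X_i}$ and \cite[Lemma~1.30]{Huy} for the commutation $S_{X_2}\circ\Phi\cong\Phi\circ S_{X_1}$, and then simply asserts that condition~(ii) holds; you have in fact been more careful than the paper in explicitly addressing the possible dimension shift $n_1\neq n_2$ and explaining why it is harmless for the argument of Theorem~\ref{main} (shift-invariance of thick subcategories).
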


\begin{proof}
Take $L_i$ to be $\omega_{X_i}$.
Then the assumption (i) is satisfied.
Recall that a Serre functor $S_{X_i}: \dpf(X_i) \to \dpf(X_i)$ on $\dpf(X_i)$ is a triangle equivalence such that there is a natural isomorphism
$$
\Hom_{\dpf(X_i)}(F, G) \cong \Hom_{\dpf(X_i)}(G, S_{X_i}(F))^*
$$
for any $F, G \in \dpf(X_i)$.
Here $(-)^*$ stands for the $k$-dual functor.
It follows from \cite[Lemma 6.6]{Ball} that $S_{X_i} \cong (-) \ltensor_{\cO_{X_i}} \omega_{X_i}[-\dim X_i]$ and from \cite[Lemma 1.30]{Huy} that $S_{X_2} \circ \Phi \cong \Phi \circ S_{X_1}$.
Therefore, the assumption (ii) follows.
Applying Theorem \ref{main}, we get an isomorphism between $(X_1)_{\red}$ and $(X_2)_{\red}$.
\end{proof}



\section{Triangular spectra as ringed spaces}

One of the key ingredients of proof of Theorem \ref{main} is the isomorphisms
$$
\rZ\left(\left(\dpf(X)/ \bigcap_{x \in U} \cS_X(x)\right)^\natural\right)_\lred \cong \rZ\left(\dpf(U)\right)_\lred \cong \Gamma(U, \cO_X)_\red \quad 
$$
for an open subset $U \subseteq X$, which is a consequence of \cite[Theorem 2.13]{Bal02} and Corollary \ref{cntsect}.
Motivated by these isomorphisms, we define the structure sheaf on the triangular spectrum for a triangulated category.

Let $\cT$ be a triangulated category.
For an open subset $U \subseteq \tSpec(\cT)$, we set
$$
\cT^U := \bigcap_{\cP \in U} \cP, \quad \cT(U) := \left(\cT/\cT^U \right)^\natural.
$$
By composing the quotient functor $Q_{U}: \cT \to \cT/\cT^U$ and the idempotent completion functor $\iota_U: \cT/\cT^U \to \cT(U) $, we have a natural functor $\res_U: \cT \to \cT(U)$. 
Thanks to Lemmas \ref{cntidm} and \ref{cntquot}, the functor $\res_U$ induces a homomorphism
$$
(\res_U)_*: \rZ(\cT)_\lred \xrightarrow{\ol{(Q_U)_*}} \rZ(\cT/\cT^U)_\lred \underset{\cong}{\xrightarrow{((\ol{\iota_U)^*})^{-1}}} \rZ(\cT(U))_\lred.
$$
Let $U \subseteq V \subseteq \tSpec(\cT)$ be open subsets.
Then the universal properties of the Verdier quotient and the idempotent completion shows that the inclusion $\cT^V \subseteq \cT^U$ induces a unique exact functor 
$$
\res_{V,U}: \cT(V) \to \cT(U)
$$
such that $\res_{V,U} \circ \res_V = \res_U$.
Again using Lemmas \ref{cntidm} and \ref{cntquot}, this functor induces a homomorphism 
$$
(\res_{V,U})_* : \rZ(\cT(V))_\lred \to \rZ(\cT(U))_\lred.
$$
Furthermore, we have the equality $(\res_{W,V})_* \circ (\res_{V,U})_* = (\res_{W,U})_*$ for three open subsets $U \subseteq V \subseteq W \subseteq \tSpec(\cT)$. 

\begin{dfn}
Let $\cT$ be a triangulated category.	
We define the sheaf $\cO_{\triangle, \cT}$ of commutative rings on $\tSpec(\cT)$ by the sheafification of the presheaf $\cO^p_{\triangle, \cT}$:
$$
\cO^p_{\triangle, \cT}(U) := \rZ(\cT(U))_{\lred}, \quad
\rho_{V,U}:= (\res_{V,U})_{*}:  \cO^p_{\triangle, \cT}(V) \to \cO^p_{\triangle, \cT}(U).
$$
Later, we consider the spectrum $\tSpec(\cT)$ as the ringed space $(\tSpec(\cT), \cO_{\triangle, \cT})$.	
\end{dfn}

Recall that a morphism $(f, f^\flat): (X, \cO_X) \to (Y, \cO_Y)$ of ringed spaces consists of a continuous map $f: X \to Y$ and a morphism $f^\flat: f^{-1}\cO_Y \to \cO_X$ of sheaves of commutative rings on $X$.
We say that the morphism $(f, f^\flat)$ is an {\it open immersion} of ringed spaces if $f:X \to Y$ is an open immersion of topological spaces and $f^\flat: f^{-1}\cO_Y \to \cO_X$ is an isomorphism.
We remark that for a continuous map $f: X \to \tSpec(\cT)$, the pullback sheaf $f^{-1}\cO_{\triangle, \cT}$ is isomorphic to the sheafification of the presheaf
$$
f^p\cO^p_{\triangle, \cT}: U \mapsto \ilim[f(U) \subseteq V]{\cO^p_{\triangle, \cT}(V)}
$$
of $X$.

Lemmas \ref{cntidm} and \ref{cntquot} show that the continuous maps in Lemmas \ref{idm} and \ref{quot} can be extended to morphisms of ringed spaces.

\begin{prop}\label{rsidm}
Let $\Phi: \cT \to \cT'$ be a fully faithful dense exact functor.
Then we have an isomorphism 
$$
\Phi^*: \tSpec(\cT') \xrightarrow{\cong} \tSpec(\cT)
$$	
of ringed spaces.
\end{prop}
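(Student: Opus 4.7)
The homeomorphism on underlying spaces is Lemma \ref{idm}, so what remains is to upgrade it to an isomorphism of structure sheaves. Since $\Phi^*$ is already a homeomorphism, the pullback sheaf $(\Phi^*)^{-1}\cO_{\triangle,\cT}$ is the sheafification of the presheaf $U \mapsto \cO^p_{\triangle,\cT}(\Phi^*(U))$, so it suffices to produce a natural isomorphism of presheaves
$$
\rZ(\cT(\Phi^*(U)))_{\lred} \xrightarrow{\cong} \rZ(\cT'(U))_{\lred}
$$
on $\tSpec(\cT')$, compatible with restriction; sheafifying then yields the claim.

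Fix an open $U \subseteq \tSpec(\cT')$ and set $V := \Phi^*(U)$. Because $\Phi^*$ acts on primes by $\cP' \mapsto \Phi^{-1}(\cP')$ and restricts to a bijection $U \to V$, a direct computation gives
$$
\Phi^{-1}((\cT')^U) = \Phi^{-1}\!\left(\bigcap_{\cP' \in U} \cP'\right) = \bigcap_{\cP' \in U} \Phi^{-1}(\cP') = \bigcap_{\cP \in V} \cP = \cT^V,
$$
so in particular $\Phi(\cT^V) \subseteq (\cT')^U$. The universal properties of the Verdier quotient and of the idempotent completion then produce a unique exact functor $\tilde\Phi_U : \cT(V) \to \cT'(U)$ satisfying $\tilde\Phi_U \circ \res_V = \res_U \circ \Phi$. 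Once $\tilde\Phi_U$ is shown to be fully faithful and dense, Lemma \ref{cntidm} supplies a ring isomorphism $(\tilde\Phi_U)^* : \rZ(\cT'(U))_{\lred} \xrightarrow{\cong} \rZ(\cT(V))_{\lred}$, whose inverse will be the desired presheaf map.

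The main obstacle is therefore verifying that $\tilde\Phi_U$ is fully faithful and dense. Density is immediate: any object of $\cT'(U)$ lifts to some $M' \in \cT'$, and a decomposition $\Phi(M) \cong M' \oplus N'$ provided by the density of $\Phi$ descends to one in $\cT'(U)$. For full faithfulness, the cleanest approach is to use that $\Phi$ extends to an equivalence $\cD := \cT^\natural \xrightarrow{\cong} (\cT')^\natural$ of idempotent completions (a standard consequence of \cite{BS}); setting $\cD^V := \bigcap_{\cP \in V} \cP^\natural$, the identical prime computation yields $\cD^V \cap \cT = \cT^V$ and $\cD^V \cap \cT' = (\cT')^U$, and a roof-calculation analogous to the proof of Lemma \ref{idm} then shows that both $\cT/\cT^V \to \cD/\cD^V$ and $\cT'/(\cT')^U \to \cD/\cD^V$ are fully faithful and dense, from which $\tilde\Phi_U$ inherits both properties. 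Naturality in $U$ follows from the uniqueness clause defining $\tilde\Phi_U$ combined with the functoriality in Lemmas \ref{cntidm} and \ref{cntquot}, and sheafification completes the construction of the isomorphism of ringed spaces.
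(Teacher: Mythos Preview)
Your argument is correct and matches the paper's approach, which consists of the single line ``This follows from Lemmas \ref{cntidm} and \ref{idm}''; you have simply unpacked what that line leaves to the reader by constructing the induced functor $\tilde\Phi_U:\cT(V)\to\cT'(U)$ and checking it is fully faithful and dense so that Lemma \ref{cntidm} applies sectionwise. The only step you treat informally---the full faithfulness of $\cT/\cT^V \to \cD/\cD^V$ for $\cD=\cT^\natural$---is a standard cofinality-of-roofs argument (replace a cone $K\in\cD^V$ by $K\oplus K[1]\in\cT^V$ to refine any $\cD$-roof to a $\cT$-roof), but note that this is not literally ``the proof of Lemma \ref{idm}'', which the paper does not give but outsources to \cite{Mat}.
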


\begin{proof}
This follows from \ref{cntidm} and Lemmas \ref{idm}.
\end{proof}

\begin{prop}\label{rsquot}
Let $\cU$ be a thick subcategory of $\cT$.
Then the quotient functor $\Phi: \cT \to \cT/\cU$ induces a morphism
$$
\Phi^*: \tSpec(\cT/\cK) \to \tSpec(\cT)
$$	
of ringed spaces.
Moreover, if the image of $\Phi^*$ is an open subset of $\tSpec(\cT)$, then the above morphism is an open immersion of ringed spaces.
\end{prop}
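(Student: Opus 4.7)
The plan is to construct the morphism $\Phi^\flat$ on presheaves and then sheafify, and to argue the isomorphism claim by checking stalks. The topological part is exactly Lemma \ref{quot}, so if the image is moreover open, $\Phi^*$ is automatically an open immersion of topological spaces; only the structure-sheaf component requires work.

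For each open $V \subseteq \tSpec(\cT)$, I would set $W := (\Phi^*)^{-1}(V) \subseteq \tSpec(\cT/\cU)$ and observe that $\Phi$ carries $\cT^V$ into $(\cT/\cU)^W$: for $M \in \cT^V$ and $\cQ \in W$, since $\Phi^{-1}(\cQ) = \Phi^*(\cQ) \in V$, we get $\Phi(M) \in \cQ$. Equivalently, letting $\cV := \Phi^{-1}((\cT/\cU)^W)$, we have $\cU \subseteq \cV$ and $\cT^V \subseteq \cV$. The Verdier iterated quotient identity $(\cT/\cU)/(\cT/\cU)^W \cong \cT/\cV$ (from \cite[Proposition 2.3.1]{Ver}, as used in the proof of Lemma \ref{cntquot}) then lets me regard the composition $\cT \xrightarrow{\Phi} \cT/\cU \to (\cT/\cU)/(\cT/\cU)^W$ as the canonical quotient $\cT \to \cT/\cV$, which factors through $\cT/\cT^V$. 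Applying Lemma \ref{cntquot} to the pair $\cT^V \subseteq \cV$ and composing with the isomorphisms furnished by Lemma \ref{cntidm} produces a ring homomorphism
\[
\rZ(\cT(V))_\lred \to \rZ((\cT/\cU)(W))_\lred.
\]
Naturality in $V$ (i.e. compatibility with $\rho_{V,V'}$ for $V' \subseteq V$) follows from the uniqueness of factorizations through Verdier quotients and idempotent completions together with the functoriality statements in Lemmas \ref{cntidm} and \ref{cntquot}. Sheafifying yields the desired $\Phi^\flat: (\Phi^*)^{-1}\cO_{\triangle, \cT} \to \cO_{\triangle, \cT/\cU}$.

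For the second assertion, write $V_0 \subseteq \tSpec(\cT)$ for the (now open) image of $\Phi^*$. To show $\Phi^\flat$ is an isomorphism, I would check stalks at an arbitrary $\cQ \in \tSpec(\cT/\cU)$. Because $V_0$ is open, the opens $V$ with $\Phi^*(\cQ) \in V \subseteq V_0$ are cofinal among opens of $\tSpec(\cT)$ containing $\Phi^*(\cQ)$, and $\Phi^*$ restricts to a homeomorphism $W = (\Phi^*)^{-1}(V) \xrightarrow{\cong} V$ by Lemma \ref{quot}. For such $V$, since $\Phi^*$ is injective onto $V_0$, one computes
\[
\cV = \Phi^{-1}\bigl((\cT/\cU)^W\bigr) = \bigcap_{\cQ' \in W} \Phi^{-1}(\cQ') = \bigcap_{\cP \in V} \cP = \cT^V,
\]
so the quotient functor $\cT/\cT^V \to \cT/\cV$ is the identity and the induced map on $\rZ(-)_\lred$ is an isomorphism. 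Passing to the colimit gives an isomorphism of stalks, hence of sheaves.

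The main obstacle is the construction step: one has to recognise that the \emph{a priori} just exact functor $\cT(V) \to (\cT/\cU)(W)$ arises (up to idempotent completion) from an honest Verdier quotient, since Lemma \ref{cntquot} applies only to canonical quotient functors and the paper explicitly flags that general exact functors are not known to induce maps on $\rZ(-)_\lred$. The iterated Verdier quotient identity $(\cT/\cU)/(\cT/\cU)^W \cong \cT/\Phi^{-1}((\cT/\cU)^W)$, together with the inclusion $\cT^V \subseteq \Phi^{-1}((\cT/\cU)^W)$, is precisely what clears this hurdle, and the cofinality used in the stalk computation then makes the open-immersion case essentially formal.
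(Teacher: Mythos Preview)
Your proposal is correct and follows essentially the same route as the paper. The only differences are presentational: the paper constructs the presheaf map in ``pullback'' form (for each open $U\subseteq\tSpec(\cT/\cU)$ and each $V\supseteq\Phi^*(U)$ it builds $\cO^p_{\triangle,\cT}(V)\to\cO^p_{\triangle,\cT/\cU}(U)$), whereas you use the adjoint ``pushforward'' form ($V\mapsto W=(\Phi^*)^{-1}(V)$); and for the open-immersion case the paper observes directly that the presheaf map is already an isomorphism on each open (taking $V=\Phi^*(U)$), while you pass to stalks via cofinality---both arguments rest on the same identity $\cT^V=\Phi^{-1}\bigl((\cT/\cU)^W\bigr)$ once the image is open.
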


\begin{proof}
We construct a morphism $(\Phi^*)^\flat: (\Phi^*)^{-1}\cO_{\triangle, \cT} \to \cO_{\triangle, \cT/\cK}$ of sheaves of commutative rings.
Take open subsets $U \subseteq \tSpec(\cT/\cK)$ and $V \subseteq \tSpec(\cT)$ with $\Phi^*(U) \subseteq V$. 
Setting $\widetilde{U} := \Phi^*(U) = \{\cP \in \tSpec(\cT) \mid \cK \subseteq \cP,  \cP/\cK \in U\}$, one sees $U= \{\cP/\cK \mid \cP \in \widetilde{U}\}$ and hence $(\cT/\cK)^U = \bigcap_{\cP \in \widetilde{U}}\cP/\cK = \cT^{\widetilde{U}}/\cK$.
Then the inclusion $\cT^V \subseteq \cT^{\widetilde{U}}$ induces a triangle functor
$$
\cT/\cT^V \to \cT/\cT^{\widetilde{U}} \cong (\cT/\cK)/(\cT^{\widetilde{U}}/\cK) = (\cT/\cK)/(\cT/\cK)^U,
$$
where the triangle equivalence is by \cite[Proposition 2.3.1(c)]{Ver}.
Applying $\rZ((-)^\natural)$, we get a homomorphism $\cO_{\triangle, \cT}^p(V) \to \cO_{\triangle, \cT/\cK}^p(U)$.
Moreover, let $U \subseteq U' \subseteq \tSpec(\cT/\cK)$ and $V \subseteq V' \subseteq \tSpec(\cT)$ be other open subsets with $\widetilde{U'} :=\Phi^*(U') \subseteq  V'$.
Then the inclusions
$$
\xymatrix{
\cT^V  \ar@{^{(}-_>}[r] & \cT^{\widetilde{U}} \\
\cT^{V'}  \ar@{^{(}-_>}[r]  \ar@{^{(}-_>}[u] & \cT^{\widetilde{U'}}  \ar@{^{(}-_>}[u]
}
$$
induce a commutative diagram
$$
\xymatrix{
\cT/\cT^V  \ar[r] & \cT/\cT^{\widetilde{U}} \ar@{}[r]|-*[@]{\cong} & (\cT/\cK)/(\cT/\cT^{\widetilde{U}})\\
\cT/\cT^{V'}  \ar[r]  \ar[u] & \cT/\cT^{\widetilde{U'}}  \ar[u] \ar@{}[r]|-*[@]{\cong} & (\cT/\cK)/(\cT/\cT^{\widetilde{U'}}). \ar[u]
}
$$
Applying $\rZ((-)^\natural)_\lred$, we obtain a commutative diagram
$$
\xymatrix{
\cO_{\triangle, \cT}^p(V)  \ar[r] & \cO_{\triangle, \cT/\cK}^p(U) \\
\cO_{\triangle, \cT}^p(V')  \ar[r]  \ar[u] & \cO_{\triangle, \cT/\cK}^p(U'). \ar[u]
}
$$
Therefore, the homomorphism $\cO_{\triangle, \cT}^p(V) \to \cO_{\triangle, \cT/\cK}^p(U)$ defines a morphism 
$
(\Phi^*)^p\cO_{\triangle, \cT}^p \to \cO_{\triangle, \cT/\cK}^p
$
of presheaves of commutative rings.
Taking sheafifications, we obtain a morphism $(\Phi^*)^\flat: (\Phi^*)^{-1}\cO_{\triangle, \cT} \to \cO_{\triangle, \cT/\cK}$ of sheaves of commutative rings.
\if0
Thus we get a homomorphism
$$
\cO^p_{\triangle, \cT/\cK}(V) \to \cO^p_{\triangle, \cT}(U)
$$
by Lemma \ref{cntquot}.
We can easily see that this map induces
$$
(\Phi^*)^p\cO^p_{\triangle, \cT/\cK} \to \cO^p_{\triangle, \cT}
$$
and hence we get a morphism of ringed spaces $\Phi^*: \tSpec(\cT/\cK) \to \tSpec(\cT)$.
\fi

Next, assume $\Phi^*(\tSpec(\cT/\cK)) \subseteq \tSpec(\cT)$ is an open subset.
Then $\Phi^*: \tSpec(\cT/\cK) \to \tSpec(\cT)$ is an open immersion of topological spaces.
For any open subset $U \subseteq \tSpec(\cT/\cK)$, the map
$
(\Phi^*)^p \cO^p_{\triangle, \cT/\cK}(U) \to  \cO^p_{\triangle, \cT}(U)  
$
is induced from the triangle equivalence
$$
\cT/\cT^{\widetilde{U}} \cong (\cT/\cK)/(\cT^{\widetilde{U}}/\cK)  = (\cT/\cK)/(\cT/\cK)^U.
$$
As a result, the map 
$
(\Phi^*)^p\cO^p_{\triangle, \cT/\cK} \to \cO^p_{\triangle, \cT}
$
is an isomorphism and hence so is $
(\Phi^*)^\flat: (\Phi^*)^{-1}\cO_{\triangle, \cT/\cK} \to \cO_{\triangle, \cT}
$.
\end{proof}

\begin{cor}\label{princ}
For an object $M \in \cT$, the quotient functor $\Phi: \cT \to \cT/\langle M \rangle$ induces an open immersion 
$$
\Phi^*: \tSpec(\cT/\langle M \rangle) \hookrightarrow \tSpec(\cT)
$$
of ringed spaces.
\end{cor}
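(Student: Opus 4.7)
The proof is a direct application of Proposition \ref{rsquot} combined with a single openness check, so I will not need to develop any new machinery. The plan is to take $\cK := \langle M \rangle$ in Proposition \ref{rsquot}, which immediately tells us that the induced map
$$
\Phi^*: \tSpec(\cT/\langle M \rangle) \to \tSpec(\cT)
$$
is a morphism of ringed spaces and is an open immersion as soon as its image is an open subset of $\tSpec(\cT)$. So the only remaining task is to verify this openness.

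Next, I would identify the image. By Lemma \ref{quot}, the image of $\Phi^*$ is exactly
$$
\{\cP \in \tSpec(\cT) \mid \langle M \rangle \subseteq \cP\}.
$$
Since every prime thick subcategory $\cP$ is thick, the condition $\langle M \rangle \subseteq \cP$ is equivalent to $M \in \cP$. Hence the complement of the image inside $\tSpec(\cT)$ equals $\{\cP \in \tSpec(\cT) \mid M \notin \cP\}$.

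To finish, I recall the definition of the topology on $\Th(\cT)$: the closed sets are $\sZ(\cE) = \{\cX \in \Th(\cT) \mid \cX \cap \cE = \emptyset\}$, and $\tSpec(\cT)$ carries the induced topology. Taking $\cE = \{M\}$, we have
$$
\sZ(\{M\}) = \{\cX \in \Th(\cT) \mid M \notin \cX\},
$$
so the complement of the image is $\sZ(\{M\}) \cap \tSpec(\cT)$, which is closed in $\tSpec(\cT)$. Therefore the image is open, and Proposition \ref{rsquot} yields the desired open immersion of ringed spaces. There is no genuine obstacle here; the statement is essentially a corollary packaging Proposition \ref{rsquot} together with the basic observation that, in the Hochster-style topology on $\Th(\cT)$, the set of thick subcategories not containing a fixed object $M$ is a basic closed set.
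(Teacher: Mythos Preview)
Your proof is correct and follows essentially the same approach as the paper: identify the image of $\Phi^*$ as $\{\cP \in \tSpec(\cT) \mid M \in \cP\} = \tSpec(\cT) \setminus \sZ(\{M\})$, observe this is open, and apply Proposition \ref{rsquot}. Your write-up is in fact slightly more careful, explicitly invoking Lemma \ref{quot} and spelling out why $\langle M \rangle \subseteq \cP$ is equivalent to $M \in \cP$.
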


\begin{proof}
The image of $\Phi^*$ is
$$
\{\cP \in \tSpec(\cT) \mid M \in \cT\} = \tSpec(\cT) \setminus \sZ(\{M\}),
$$	
which is an open subset of $\tSpec(\cT)$.
The result follows from Proposition \ref{rsquot}.
\end{proof}

The main theorem in this section compares the Balmer spectrum and the triangular spectrum for an idempotent complete, rigid, and locally monogenic tensor triangulated category.
We treat the monogenic case first.

\begin{lem}\label{aff}
Let $\cT$ be an idempotent complete, rigid, monogenic tensor triangulated category.
Assume further that $\ttSpec(\cT)$ is noetherian.
Then the equality $\tSpec(\cT) = \ttSpec(\cT)$ holds.
\end{lem}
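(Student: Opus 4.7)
The plan is to establish the equality at the level of underlying sets of $\Th(\cT)$; since both sides carry the subspace topology from $\Th(\cT)$, equality of topological spaces follows immediately. The first observation is that the monogenic hypothesis $\cT = \langle \one \rangle$ forces every thick subcategory $\cX \subseteq \cT$ to be a thick $\otimes$-ideal: for any $N \in \cX$, the class $\{M \in \cT \mid M \otimes N \in \cX\}$ is a thick subcategory of $\cT$ containing $\one$, and so equals $\cT$ by monogenicity. Thus the task reduces to showing that the two notions of \emph{prime} coincide in this setting.

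Next, I would invoke Balmer's classification of thick ideals, which under rigidity and noetherianness of $\ttSpec(\cT)$ gives an inclusion-preserving bijection between thick ideals of $\cT$ and specialization-closed subsets of $\ttSpec(\cT)$, via $\cX \mapsto \ttSupp(\cX) := \bigcup_{M \in \cX} \ttSupp(M)$. Under this bijection, a point $\mathfrak{p} \in \ttSpec(\cT)$ (viewed as a prime thick ideal) corresponds to $S_{\mathfrak{p}} := \{\mathfrak{q} \in \ttSpec(\cT) \mid \mathfrak{p} \notin \overline{\{\mathfrak{q}\}}\}$. The statement of the lemma then becomes purely topological: a specialization-closed subset $S \subseteq \ttSpec(\cT)$ is of the form $S_{\mathfrak{p}}$ for some $\mathfrak{p}$ if and only if the poset of specialization-closed subsets of $\ttSpec(\cT)$ strictly containing $S$ has a smallest element.

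For the ``if'' direction, given $\mathfrak{p} \in \ttSpec(\cT)$, I would check that $S_{\mathfrak{p}} \cup \overline{\{\mathfrak{p}\}}$ is the smallest specialization-closed set strictly containing $S_{\mathfrak{p}}$: any such $S' \supsetneq S_{\mathfrak{p}}$ contains a point $\mathfrak{q}$ with $\mathfrak{p} \in \overline{\{\mathfrak{q}\}}$, and specialization-closedness gives $\overline{\{\mathfrak{q}\}} \subseteq S'$, whence $\overline{\{\mathfrak{p}\}} \subseteq S'$. For the ``only if'' direction, given $\cP \in \tSpec(\cT)$ corresponding to $S_{\cP}$ with smallest strict spec-closed superset $S_{\min}$, I would pick any $\mathfrak{p} \in S_{\min} \setminus S_{\cP}$; minimality forces $S_{\cP} \cup \overline{\{\mathfrak{p}\}} = S_{\min}$. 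To conclude $S_{\cP} = S_{\mathfrak{p}}$ it then suffices to note that for any $\mathfrak{q} \notin S_{\cP}$, the set $S_{\cP} \cup \overline{\{\mathfrak{q}\}}$ is spec-closed and strictly contains $S_{\cP}$, so contains $S_{\min} \ni \mathfrak{p}$; since $\mathfrak{p} \notin S_{\cP}$, this forces $\mathfrak{p} \in \overline{\{\mathfrak{q}\}}$, which is exactly the condition $\mathfrak{q} \notin S_{\mathfrak{p}}$.

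The main obstacle is to justify the invocation of Balmer's classification in the needed form: rigidity ensures that every thick ideal is radical so that the support map is injective on thick ideals, while the noetherianness of $\ttSpec(\cT)$ lets us identify Thomason subsets with specialization-closed subsets. Once this dictionary is in place, the remainder of the argument is a short combinatorial manipulation of specialization-closed subsets.
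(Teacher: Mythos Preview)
Your proposal is correct and follows essentially the same strategy as the paper: reduce to thick ideals via monogenicity, then invoke Balmer's classification (using rigidity to get radicality and noetherianness to identify Thomason with specialization-closed) to translate the problem into statements about specialization-closed subsets of $\ttSpec(\cT)$. Your argument for $\ttSpec(\cT) \subseteq \tSpec(\cT)$ is the same as the paper's.

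The one point of divergence is the direction $\tSpec(\cT) \subseteq \ttSpec(\cT)$. The paper argues this algebraically: since $\cP$ is a radical thick ideal, it equals the intersection of the prime thick ideals containing it; if none of these equals $\cP$, they all strictly contain $\cP$ and hence contain the smallest strict thick superset $\cX$, forcing $\cP \subsetneq \cX \subseteq \cP$, a contradiction. You instead stay on the topological side, picking $\mathfrak{p} \in S_{\min} \setminus S_{\cP}$ and showing $S_{\cP} = S_{\mathfrak{p}}$. Your write-up only spells out the containment $S_{\mathfrak{p}} \subseteq S_{\cP}$; the reverse containment $S_{\cP} \subseteq S_{\mathfrak{p}}$ is immediate from $\mathfrak{p} \notin S_{\cP}$ and specialization-closedness of $S_{\cP}$, so you should state it. Apart from that, both arguments are equally short; yours has the virtue of treating the two directions uniformly, while the paper's is slightly more direct for this one inclusion.
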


\begin{proof}
Since $\cT = \langle \one \rangle$, every thick subcategory is a thick ideal.
Therefore, by \cite[Remark 4.3]{Bal05}, the thick subcategories and the radical thick ideals coincide.

Let $\cP \in \tSpec(\cT)$ and take the smallest element $\cX$ in $\{\cX \in \Th(\cT) \mid \cP\subsetneq \cX\}$.
Since $\cP$ is a radical thick ideal by the above argument, it is the intersection of all prime thick ideals $\cQ$ containing $\cP$ by \cite[Lemma 4.2]{Bal05}.
If $\cP$ is not a prime thick ideal, then such $\cQ$ satisfies $\cP \subsetneq \cQ$ and hence $\cX \subseteq \cQ$ by the assumption on $\cX$.
Therefore, $\cP \subsetneq \cX \subseteq \bigcap_{\cQ \in \ttSpec(\cT), \cP \subseteq \cQ} \cQ = \cP$ leads a contradiction.
Hence we conclude that the inclusion $\tSpec(\cT) \subseteq \ttSpec(\cT)$ holds.

Conversely, take $\cP \in \ttSpec(\cT)$ and prove $\cP \in \tSpec(\cT)$. 
It follows from \cite[Theorem 4.10]{Bal05} that there is an order-preserving bijection between the thick subcategories of $\cT$ and the specialization-closed subsets of $\ttSpec(\cT)$. 
The noetherian assumption is used here.
Under this bijection, $\cP$ corresponds to the specialization-closed subset
$$
W := \{\cQ \in \ttSpec(\cT) \mid \cP \not\subseteq \cQ \} = \{\cQ \in \ttSpec(\cT) \mid \cP \not\in \ol{\{\cQ\}} \}.
$$ 
Here we use \cite[Proposition 2.9]{Bal05}.
Therefore, it suffices to show that there is the smallest specialization-closed subset $T$ with $W \subsetneq T$.
Set $T: = W \cup \{\cP\}$ and prove that this is the specialization-closed subset that we need.
For an element $\cQ \in \ol{\{\cP\}}\setminus \{\cP\}$, $\cQ$ belongs to $W$ as $\cP \not\subseteq \cQ$.
Thus $T = W \cup \ol{\{\cP\}}$ holds and this is specialization-closed.
For a specialization-closed subset $W' \subseteq \ttSpec(\cT)$ with $W \subsetneq W'$, we will check $T \subseteq W'$.
Since $W \subsetneq W'$, there is an element $\cQ \in W'$ such that $\cQ \not\in W$.
Then $\cQ \not\in W$ implies $\cP \in \ol{\{\cQ\}}$.
As $W'$ is specialization-closed, one has $\cP \in \ol{\{\cQ\}} \subseteq W'$.
This shows that the inclusion $T \subseteq W'$ and hence $T$ is the smallest specialization-closed subset with $W \subsetneq T$.
\end{proof}

\begin{thm}\label{main2}
Let $(\cT, \otimes, \one)$ be an idempotent complete, rigid, locally monogenic tensor triangulated category. 
Assume further that $\ttSpec(\cT)$ is noetherian.
\begin{enumerate}[\rm(1)]
\item
Let $\cP$ be a thick ideal of $\cT$. 
Then $\cP$ is a prime thick ideal if and only if it is a prime thick subcategory.
In particular, there is an inclusion $\ttSpec(\cT) \subseteq \tSpec(\cT)$.
\item
There is a morphism 
$$
i: \ttSpec(\cT)_\red \to \tSpec(\cT)
$$
of ringed spaces satisfying the following conditions:
\begin{enumerate}[\rm(a)]
\item
The map of underlying topological spaces is the inclusion $\ttSpec(\cT) \subseteq \tSpec(\cT)$.
\item
$i$ is an open immersion of ringed spaces whenever $\ttSpec(\cT)$ is an open subset of $\tSpec(\cT)$.
\item
$i$ is an isomorphism of ringed spaces if $\cT$ is monogenic.
\end{enumerate}
	
\end{enumerate}
\end{thm}

\begin{proof}
(1) The ``only if" part follows from \cite[Proposition 4.8]{Mat}. 
Here we note that under the assumption, every thick ideal is radical by \cite[Proposition 2.4]{Bal07}.

Let us show the ``if" part.
Take $\cP \in \ttSpec(\cT)$ and a quasi-compact open neighborhood $\cP \in U \subseteq \ttSpec(\cT)$ with $\cT(U) = \langle \one_U \rangle$. 
Then $\cT(U)$ is idempotent complete, rigid, and monogenic.
On the other hand, it follows from \cite[Proposition 1.11]{BF} that the canonical functor $\res_U: \cT \to \cT(U)$ induces a homeomorphism $(\res_U)^*: \ttSpec(\cT(U)) \xrightarrow{\cong} U$.
In particular, $\ttSpec(\cT(U))$ is noetherian.
Applying Lemma \ref{aff} yields the equality $\ttSpec(\cT(U)) = \tSpec(\cT(U))$.
Then $U$ is a subset of $\tSpec(\cT)$ via the composition
$$
U \xrightarrow{((\res_U)^*)^{-1}} \ttSpec(\cT(U)) = \tSpec(\cT(U)) \xrightarrow{(\res_U)^*} \tSpec(\cT).
$$
Therefore, we get $\cP \in U \subseteq \tSpec(\cT)$.

(2) 
Let $i: \ttSpec(\cT) \hookrightarrow \tSpec(\cT)$ denote the inclusion and construct a morphism $i^{-1}\cO_{\triangle, \cT} \to (\cO_{\otimes, \cT})_\red$ of sheaves of rings.
For open subsets $U \subseteq \ttSpec(\cT)$ and $V \subseteq \tSpec(\cT)$ with $U \subseteq V$, the inclusion $\cT^V \subseteq \cT^U$ induces a  homomorphism
$$
\cO^p_{\triangle, \cT}(V) = \rZ\left(\cT(V) \right)_\lred \to \rZ\left(\cT(U) \right)_\lred \cong \End_{\cT(U)}(\one_U)_\red = \cO^p_{\otimes, \cT}(U)_\red,
$$
where the isomorphism is proved in Proposition \ref{prop}.
Moreover, for other open subsets $U \subseteq U' \subseteq \ttSpec(\cT)$ and $V \subseteq V' \subseteq \tSpec(\cT)$ with $U' \subseteq V'$, we get the commutative diagram
$$
\xymatrix{
\cT/\cT^V  \ar[r] & \cT/\cT^{U} \\
\cT/\cT^{V'}  \ar[r]  \ar[u] & \cT/\cT^{U'}. \ar[u]
}
$$
Applying $\rZ((-)^\natural)_\lred$ yields a commutative diagram
$$
\xymatrix{
\cO_{\triangle, \cT}^p(V) \ar@{=}[r] & \rZ(\cT(V))_\lred  \ar[r] & \rZ(\cT(U))_\lred \ar@{}[r]|-*[@]{\cong} & \cO^p_{\otimes, \cT}(U)_\red \\
\cO_{\triangle, \cT}^p(V') \ar@{=}[r] \ar[u] & \rZ(\cT(V'))_\lred  \ar[r]  \ar[u] & \rZ(\cT(U'))_\lred \ar[u] \ar@{}[r]|-*[@]{\cong} & \cO^p_{\otimes, \cT}(U')_\red. \ar[u]
}
$$
Therefore, $\cO^p_{\triangle, \cT}(V) \to  \cO^p_{\otimes, \cT}(U)_\red$ defines a morphism
$
i^p\cO^p_{\triangle, \cT} \to (\cO^p_{\otimes, \cT})_\red
$
of presheaves of commutative rings.
Taking sheafifications, we obtain a morphism 
$
i^\flat: i^{-1}\cO_{\triangle, \cT} \to (\cO_{\otimes, \cT})_\red 
$
of sheaves of commutative rings.

Assume $\ttSpec(\cT)$ is an open subset of $\tSpec(\cT)$.
Then for any open subset $U \subseteq \ttSpec(\cT)$,  the homomorphism $i^p\cO^p_{\triangle, \cT}(U) \to \cO^p_{\otimes, \cT}(U)$ is given by the isomorphism
$$
\rZ(\cT(U))_\lred \cong \End_{\cT(U)}(\one_U)_\red
$$
in Proposition \ref{prop}.
Thus, $i^p\cO^p_{\triangle, \cT}(U) \to \cO^p_{\otimes, \cT}(U)$ is an isomorphism and hence so is $i^{-1}\cO_{\triangle, \cT}(U) \to \cO_{\otimes, \cT}(U)$. 
This means that $i: \ttSpec(\cT) \to \tSpec(\cT)$ is an open immersion of ringed spaces.
In particular, if $\cT$ is monogenic, then $\ttSpec(\cT) = \tSpec(\cT)$ holds by \ref{aff} and hence $i: \ttSpec(\cT) \to \tSpec(\cT)$ is an isomorphism of ringed spaces.
\end{proof}

As $\dpf(X)$ is an idempotent complete, rigid, locally monogenic tensor triangulated category, we get the following immediate consequence of the combination of Theorems \ref{balm} and \ref{main2}.

\begin{cor}\label{imm}
For a noetherian scheme $X$, there is a morphism of ringed spaces
$$
\cS_X: X_{\mathrm{red}} \to \tSpec(\dpf(X)),\quad x \mapsto \cS_X(x),
$$
which is an open immersion of ringed spaces if $\cS_X(X)$ is open in $\tSpec(\dpf(X))$.
In particular, $\cS_X: X_{\mathrm{red}} \to \tSpec(\dpf(X))$ is an isomorphism of ringed spaces if $X$ is quasi-affine.
\end{cor}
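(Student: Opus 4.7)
The plan is to derive this corollary by applying Theorem \ref{main2} to the tensor triangulated category $\cT = \dpf(X)$ and then composing the resulting morphism with Balmer's reconstruction isomorphism from Theorem \ref{balm}. The bulk of the work is checking that $\dpf(X)$ satisfies the hypotheses of Theorem \ref{main2}.

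First, I would verify the four hypotheses of Theorem \ref{main2} for $\cT = \dpf(X)$. Idempotent completeness is standard (perfect complexes are the compact objects of $\dub_{qc}(\Mod \cO_X)$). Rigidity is \cite[Proposition 4.1]{Bal07}. Local monogenicity follows exactly as in the proof of Corollary \ref{cntsect}: Theorem \ref{balm} combined with \cite[Theorem 2.13]{Bal02} identifies an affine open neighborhood $U$ of any point with a quasi-compact open of $\ttSpec(\dpf(X))$ over which $\cT(U) \cong \dpf(U) = \langle \cO_U \rangle$. Finally, the noetherianness of $\ttSpec(\dpf(X))$ is immediate from Theorem \ref{balm} and the noetherianness of $X$. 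These checks give a morphism of ringed spaces
$$
i : \ttSpec(\dpf(X))_\red \to \tSpec(\dpf(X))
$$
by Theorem \ref{main2}.

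Next, Theorem \ref{balm} provides an isomorphism $\cS_X : X \xrightarrow{\cong} \ttSpec(\dpf(X))$ of ringed spaces, which passes to an isomorphism $X_\red \xrightarrow{\cong} \ttSpec(\dpf(X))_\red$ of the associated reduced ringed spaces. Composing with $i$ yields the desired morphism $\cS_X : X_\red \to \tSpec(\dpf(X))$, and the description $x \mapsto \cS_X(x)$ on points is inherited directly from Theorem \ref{balm} together with Theorem \ref{main2}(1). If $\cS_X(X) \subseteq \tSpec(\dpf(X))$ happens to be open, then $\ttSpec(\dpf(X))$ is open in $\tSpec(\dpf(X))$, so Theorem \ref{main2}(2) tells us that $i$ is an open immersion of ringed spaces, and hence so is $\cS_X$.

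For the quasi-affine case, Proposition \ref{top}(1) gives a homeomorphism $\tSpec(\dpf(X)) \cong X$. Since the inclusion $\ttSpec(\dpf(X)) \subseteq \tSpec(\dpf(X))$ is a priori part of Theorem \ref{main2}(1) and both sides are homeomorphic to $X$ via $\cS_X$, the inclusion must be an equality; in particular $\cS_X(X) = \tSpec(\dpf(X))$ is open. By the previous paragraph, $\cS_X$ is then an open immersion whose image is everything, hence an isomorphism of ringed spaces. I do not expect any real obstacle here—the work has been done in Theorems \ref{balm} and \ref{main2}, and the only mildly delicate point is the bookkeeping confirming that $\cS_X(X) = \tSpec(\dpf(X))$ (not merely $\ttSpec$) in the quasi-affine situation, which rests on Proposition \ref{top}(1).
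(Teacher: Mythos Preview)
Your proposal is correct and follows essentially the same approach as the paper: verify that $\dpf(X)$ satisfies the hypotheses of Theorem~\ref{main2} (exactly as in Corollary~\ref{cntsect}), compose the resulting morphism with Balmer's isomorphism from Theorem~\ref{balm}, and read off the open-immersion statement from Theorem~\ref{main2}(2). The only minor deviation is in the quasi-affine case: rather than routing through Proposition~\ref{top}(1) to conclude $\ttSpec(\dpf(X)) = \tSpec(\dpf(X))$, you can invoke Theorem~\ref{main2}(3) directly, since for quasi-affine $X$ the structure sheaf $\cO_X$ is ample and hence $\dpf(X) = \langle \cO_X \rangle$ is monogenic; but Proposition~\ref{top}(1) is proved by exactly this observation, so the two routes coincide in substance.
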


\begin{rem}
Since the ringed space $\tSpec(\dpf(X))$ is entirely determined by the triangulated category structure of $\dpf(X)$, Corollary \ref{imm} directly implies Corollary \ref{qaff}.	
Moreover, in a recent work \cite{IM}, Ito and the author proved that $\cS_X(X) \subseteq \tSpec(\dpf(X))$ is an open subset if $X$ is a quasi-projective variety over an algebraically closed field. 
Consequently, $\cS_X: X \to \tSpec(\dpf(X))$ is an open immersion of ringed spaces under the assumption.
As an application, we relax the assumption (i) in Theorem \ref{main}, and it gives an alternative proof of Bondal-Orlov, Ballad reconstruction theorem \cite[Theorem 2]{Ball}. 
\end{rem}

Using Corollary \ref{imm}, we determine $\tSpec(\dpf(X))$ for $X$ which appeared in Proposition \ref{top}.

\begin{cor}\label{p1}
Let $\PP^1$ be the projective line over a field $k$.
Then there is an isomorphism 
$$
\tSpec(\dpf(\PP^1)) \cong \PP^1 \sqcup \left( \bigsqcup_{n \in \ZZ} \Spec (k)\right)
$$	
of ringed spaces.
\end{cor}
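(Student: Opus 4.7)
The plan is to combine Proposition \ref{top}(2) for the underlying topology with a component-by-component computation of the structure sheaf, using Corollary \ref{imm} on the $\PP^1$ piece and a direct center calculation on the isolated primes.

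By Proposition \ref{top}(2), there is a homeomorphism $\tSpec(\dpf(\PP^1)) \cong \PP^1 \sqcup \ZZ$, where the $\PP^1$ component is the image of $\cS_{\PP^1} : \PP^1 \hookrightarrow \tSpec(\dpf(\PP^1))$ from Theorem \ref{mat} (and so equals $\ttSpec(\dpf(\PP^1))$), while each $n \in \ZZ$ corresponds to an isolated extra prime $\cP_n$. Because both pieces of a disjoint union are open, $\ttSpec(\dpf(\PP^1))$ is an open subset of $\tSpec(\dpf(\PP^1))$, and each $\{\cP_n\}$ is open as well. It therefore suffices to identify the structure sheaf on each of these open subsets.

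For the $\PP^1$ component, I would apply Corollary \ref{imm}: since $\ttSpec(\dpf(\PP^1))$ is open, the map $\cS_{\PP^1}$ is an open immersion of ringed spaces, and because $\PP^1$ is reduced, $(\PP^1)_\red = \PP^1$. This identifies the $\PP^1$ summand of $\tSpec(\dpf(\PP^1))$ with $\PP^1$ as a ringed space. For each isolated point, I would compute
$$
\cO_{\triangle,\dpf(\PP^1)}(\{\cP_n\}) \;=\; \rZ\bigl((\dpf(\PP^1)/\cP_n)^\natural\bigr)_{\lred}
$$
directly. From the description in \cite[Example 4.10]{Mat}, $\cP_n$ is constructed from the exceptional line bundle $\cO(n)$ in such a way that the quotient $(\dpf(\PP^1)/\cP_n)^\natural$ is triangle equivalent to $\dpf(k)$. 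Since $\dpf(k)$ is idempotent complete, rigid, and monogenic (generated by $k = \one$), Proposition \ref{prop} yields
$$
\rZ(\dpf(k))_{\lred} \;\cong\; \End_{\dpf(k)}(k)_{\red} \;=\; k,
$$
so the ringed space structure on $\{\cP_n\}$ is exactly that of $\Spec(k)$. Gluing these identifications over the disjoint union of open pieces delivers the asserted isomorphism of ringed spaces.

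The main obstacle is the third paragraph: one needs to extract from \cite[Example 4.10]{Mat} enough information about the extra primes $\cP_n$ to recognise the quotient category (after idempotent completion) as $\dpf(k)$. Once this triangle equivalence is in place, the center computation is routine via Proposition \ref{prop}, and the functoriality of $\cO_{\triangle, -}$ established in Propositions \ref{rsidm} and \ref{rsquot} ensures that the local pictures glue into a global isomorphism of ringed spaces.
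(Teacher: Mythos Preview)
Your proposal is correct and follows essentially the same route as the paper: decompose $\tSpec(\dpf(\PP^1))$ into the open piece $\ttSpec(\dpf(\PP^1))\cong\PP^1$ handled by Corollary~\ref{imm}, and the isolated primes $\cP_n=\langle\cO_{\PP^1}(n)\rangle$ handled via the equivalence $(\dpf(\PP^1)/\cP_n)^\natural\cong\dpf(k)$. The paper packages the isolated-point step slightly differently, invoking Corollary~\ref{princ} to get an open immersion $\tSpec(\dpf(\PP^1)/\langle\cO(n)\rangle)\hookrightarrow\tSpec(\dpf(\PP^1))$ of ringed spaces and citing \cite[Corollary~8.29]{Huy} for the quotient equivalence, whereas you compute the section on $\{\cP_n\}$ directly and appeal to Proposition~\ref{prop}; these are the same argument up to presentation.
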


\begin{proof}
For any integer $n \in \ZZ$, it follows from \cite[Corollary 8.29]{Huy} that we get triangle equivalences $\dpf(\PP^1)/\langle \cO_{\PP^1}(n)\rangle \cong \langle \cO_{\PP^1}(n+1) \rangle \cong \dpf(\Spec(k))$.
It follows from Corollaries \ref{princ} and \ref{imm} that there is an open immersion
$$
f_n: \Spec(k) \underset{\cong}{\xrightarrow{\cS_{\Spec(k)}}}\tSpec(\dpf(\Spec(k))) \cong \tSpec(\dpf(\PP^1)/ \langle \cO_{\PP^1}(n) \rangle) \hookrightarrow \tSpec(\dpf(\PP^1)) 
$$	
of ringed spaces whose image is $\langle \cO_{\PP^1}(n) \rangle$.
Moreover, Corollary \ref{imm} and \cite[Corollary 4.7]{HO} show that there is an open immersion
$$
g:=\cS_{\PP^1}: \PP^1 \hookrightarrow \tSpec(\dpf(\PP^1)) 
$$
of ringed spaces.
As it is proved in \cite[Example 4.10]{Mat}, $\tSpec(\dpf(\PP^1))$ is the disjoint union of the images of $f_n\,\,(n \in \ZZ)$ and $g$.
Therefore, $f_n\,\,(n \in \ZZ)$ and $g$ induce isomorphism
$$
\PP^1 \sqcup \left( \bigsqcup_{n \in \ZZ} \Spec (k)\right) \cong \tSpec(\dpf(\PP^1)) 
$$
of ringed spaces.
\end{proof}

\begin{cor}\label{elp}
Let $E$ be an elliptic curve over an algebraically closed field. Then there is an isomorphism
$$
\tSpec(\dpf(E)) \cong E \sqcup \left(\bigsqcup_{(r,d) \in I} E_{r,d}\right)
$$
of ringed spaces.
Here, $I:=\{(r,d) \in \ZZ^{2} \mid r >0, \gcd(r,d) = 1\}$ and $E_{r,d}$ is a copy of $E$ for each $(r,d) \in I$.

In particular, for elliptic curves $E$ and $E'$, if there is a triangle equivalence $\dpf(E) \cong \dpf(E')$, then there is an isomorphism $E \cong E'$ of schemes.
\end{cor}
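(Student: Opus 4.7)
The plan is to adapt the blueprint of the proof of Corollary \ref{p1}, but to use triangle autoequivalences of $\dpf(E)$ in place of the Verdier quotients $\dpf(\PP^1)/\langle \cO_{\PP^1}(n)\rangle$. Since elliptic curves admit no exceptional objects, the quotient strategy that works on $\PP^1$ has no direct analogue here; however, $\dpf(E)$ carries a rich autoequivalence group, which suffices to permute the connected components of $\tSpec(\dpf(E))$ onto one another. The underlying topological decomposition is already furnished by Proposition \ref{top}(3), so the task is purely to upgrade each component of that decomposition to a morphism of ringed spaces.

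First I would treat the Balmer component. Since $E$ is an elliptic curve (hence reduced), and since the disjoint union decomposition of Proposition \ref{top}(3) makes $\ttSpec(\dpf(E))$ a clopen subset of $\tSpec(\dpf(E))$, Corollary \ref{imm} supplies an open immersion of ringed spaces $\cS_E: E \hookrightarrow \tSpec(\dpf(E))$ whose image is exactly the Balmer component.

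Next, for each $(r,d) \in I$, I would exhibit an open immersion $h_{r,d}: E \hookrightarrow \tSpec(\dpf(E))$ of ringed spaces with image $E_{r,d}$. The key tool is the classical $SL(2,\ZZ)$-action on $\dpf(E)$, generated by tensoring with a fixed degree-one line bundle and by the Fourier-Mukai transform with Poincar\'e kernel. Because $SL(2,\ZZ)$ acts transitively on primitive vectors of $\ZZ^{2}$, this action yields, for every $(r,d) \in I$, a triangle autoequivalence $\Psi_{r,d}: \dpf(E) \xrightarrow{\cong} \dpf(E)$ that sends skyscraper sheaves to rank-$r$ degree-$d$ stable bundles. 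By Proposition \ref{rsidm}, $\Psi_{r,d}$ induces an automorphism $\Psi_{r,d}^{*}$ of the ringed space $\tSpec(\dpf(E))$, and by tracing through the classification in \cite{HO} this automorphism carries the Balmer component bijectively onto $E_{r,d}$. I then put $h_{r,d} := (\Psi_{r,d}^{*})^{-1} \circ \cS_E$.

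Finally, by Proposition \ref{top}(3) the images of $\cS_E$ and of $h_{r,d}$ for $(r,d) \in I$ are pairwise disjoint and exhaust $\tSpec(\dpf(E))$, so gluing these open immersions produces the asserted isomorphism of ringed spaces. The main obstacle I anticipate lies not in the autoequivalence construction itself but in the bookkeeping required to verify that $\Psi_{r,d}^{*}$ matches the Hirano-Ouchi labeling: one must check that the induced action of $SL(2,\ZZ)$ on Chern characters is compatible with the indexing of the components of $\tSpec(\dpf(E))$ by pairs $(r,d)$, so that the Balmer component is sent to the intended copy $E_{r,d}$ rather than to some other component. Once this labeling is pinned down, the ringed-space statement is a formal consequence of Propositions \ref{rsidm} and \ref{top}(3) together with Corollary \ref{imm}.
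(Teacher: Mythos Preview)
Your proposal is correct and follows essentially the same approach as the paper: the paper phrases the relevant triangle equivalences as the Hein--Ploog Fourier--Mukai equivalences $\Phi_{r,d}:\dpf(E)\xrightarrow{\cong}\dpf(M(r,d))$ together with Tu's isomorphism $M(r,d)\cong E$, whereas you package the same Fourier--Mukai technology as $SL(2,\ZZ)$-autoequivalences $\Psi_{r,d}$ of $\dpf(E)$, but in both cases one then invokes Proposition~\ref{rsidm} and Corollary~\ref{imm} (with openness coming from the Hirano--Ouchi decomposition) to transport the Balmer component onto each $E_{r,d}$ as ringed spaces. The labeling check you flag is exactly the content of \cite[Theorem~4.11]{HO}, which the paper cites at the corresponding step.
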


\begin{proof}
For an element $(r,d) \in I$, $M(r,d)$ denotes the moduli space of $\mu$-semistable sheaves with Chern character $(r,d)$. 
It follows from \cite[Theorem 1]{Tu} that $M(r,d) \cong E_{r,d}$, where $E_{r,d}$ is a copy of $E$. 
Moreover, \cite[Proposition 3]{HP} shows that there is a triangle equivalence 
$$
\Phi_{r,d}:  \dpf(E)\xrightarrow{\cong} \dpf(M(r,d)).
$$
Then we get open immersions 
\begin{align*}
f_{r,d}&: E_{r,d} \cong M(r,d) \overset{\cS_{M(r,d)}}{\hookrightarrow} \tSpec(\dpf(M(r,d))) \underset{\cong}{\xrightarrow{\Phi_{r,d}^*}} \tSpec(\dpf(E)), \\	
g &:= \cS_E: E \hookrightarrow \tSpec(\dpf(E))
\end{align*}
of ringed spaces by Corollary \ref{imm} and \cite[Corollary 4.7]{HO}.
It is shown in  \cite[Theorem 4.11]{HO} that $\tSpec(\dpf(E))$ is the disjoint union of open subsets $f_{r,d}(E_{r,d})\,\, ((r,d) \in I)$ and $g(E)$.
Hence $f_{r,d}\,\, ((r,d) \in I)$ and $g$ induce an isomorphism
$$
E \sqcup \left(\bigsqcup_{(r,d) \in I} E_{r,d}\right) \xrightarrow{\cong} \tSpec(\dpf(E))
$$
of ringed spaces. 

Next, assume there is a triangle equivalence $\dpf(E) \cong \dpf(E')$ for elliptic curves $E$ and $E'$.
Then the first statement shows that there is an isomorphism between disjoint unions of copies of $E$ and $E'$: 
$$
\bigsqcup E  \cong \tSpec(\dpf(E)) \cong \tSpec(\dpf(E')) \cong  \bigsqcup E'. 
$$
Comparing their connected components, we get an isomorphism $E \cong E'$.
\end{proof}

\begin{rem}
The second statement of Corollary \ref{elp} can be found in \cite[Pages 134 and 135]{Huy} for example.
However, our proof uses the completely different argument.
\end{rem}


\end{document}